\theoremstyle{plain}
\newtheorem{prop}{Proposition}[section]
\newtheorem{thm}[prop]{Theorem}
\theoremstyle{definition}
\newtheorem{cor}[prop]{Corollary}
\newcommand{\Z}{\ensuremath{\mathbf{Z}}}
\newcommand{\F}{\ensuremath{\mathbf{F}}}
\newcommand{\Q}{\ensuremath{\mathbf{Q}}}
\newcommand{\C}{\ensuremath{\mathbf{C}}}
\newcommand{\GL}{\ensuremath{\mathbf{GL}}}
\newcommand{\SL}{\ensuremath{\mathbf{SL}}}
\newcommand{\PSL}{\ensuremath{\mathbf{PSL}}}
\newcommand{\AGL}{\ensuremath{\mathbf{AGL}}}
\newcommand{\Sym}{\ensuremath{\mathbf{S}}}
\newcommand{\A}{\ensuremath{\mathbf{A}}}
\newcommand{\Hol}{\ensuremath{\mathbf{Hol}}}
\newcommand{\Aut}{\ensuremath{\mathbf{Aut}}}
\newcommand{\UC}{\ensuremath{\mathbf{UC}}}
\newcommand{\n}{\noindent}
\newcommand{\dih}{\ensuremath{\mathbf{D}}}
\newcommand{\cyc}{\ensuremath{\mathbf{C}}}
\newcommand{\upt}{\ensuremath{\mathbf{U}}}
\renewcommand{\F}{\ensuremath{\mathbf{F}}}
\renewcommand{\Z}{\ensuremath{\mathbf{Z}}}
\renewcommand{\C}{\ensuremath{\mathbf{C}}}
\renewcommand{\A}{\ensuremath{\mathbf{A}}}
\newcommand{\D}{\ensuremath{\mathbf{D}}}
\newcommand{\mm}[4]{\ensuremath{\begin{bmatrix} #1 & #2 \\ #3 & #4 \end{bmatrix}}}
\begin{document}  
 
\title{Fuchs' problem for linear groups}
\date{\today}
 
\author{Keir Lockridge} 
\address {Department of Mathematics \\
Gettysburg College \\
Gettysburg, PA 17325, USA}
\email{klockrid@gettysburg.edu}

\author{Jacinda Terkel} 
\address {Department of Mathematics \\
Gettysburg College \\
Gettysburg, PA 17325, USA}
\email{terkja01@gettysburg.edu}

\thanks{The second author was supported by the Julia Gatter Hirsch Endowed Fund for Faculty/Student Research in the Mathematics Department at Gettysburg College.}

\keywords{group of units, realizable, special linear group, affine general linear group, holomorph}
\subjclass[2000]{Primary 16U60, 20G40; Secondary 16S34}
 
\begin{abstract}
Which groups can occur as the group of units in a ring? Such groups are called {\em realizable}. Though the realizable members of several classes of groups have been determined (e.g., cyclic, odd order, alternating, symmetric, finite simple, indecomposable abelian, and dihedral), the question remains open. The general linear groups are realizable by definition: they are the units in the corresponding matrix rings. In this paper, we study the realizability of two closely related linear groups, the special linear groups and the affine general linear groups. We determine which special linear groups of degree 2 over a finite field are realizable by a finite ring, and we determine which affine general linear groups of degree 1 over a cyclic group are realizable by a finite ring. We also give partial results for certain linear groups of other degrees and for rings of characteristic zero.
\end{abstract}
 
\maketitle
\thispagestyle{empty}



\section{Introduction}

This paper is part of an ongoing investigation into a question posed (originally for abelian groups) by L\'{a}szl\'{o} Fuchs in \cite[Problem 72]{F60}: {\em which groups can occur as the group of units in a unital ring?} Such groups are called realizable. More precisely, a group $G$ is {\bf realizable} if there is a ring $R$ with identity whose unit group $R^\times$ is isomorphic to $G$. In this situation, we say that $R$ {\bf realizes} $G$. Though there is no full classification of the realizable groups, there are many partial results. For example, the realizable groups have been classified in the following families: cyclic groups (\cite{Gil63, PS70}), groups of odd order (\cite{Ditor71}), alternating and symmetric groups (\cite{DO14b}), finite simple groups (\cite{DO14a}), indecomposable abelian groups (\cite{CL15}), and dihedral groups (\cite{CL17d}). Partial results on realizable 2-groups appear in \cite{CL17} and \cite{SW20}.

The general linear groups $\GL_n(R)$ (for $R$ a commutative ring with identity) are realizable since $\GL_n(R)$ is the group of units in the ring of $n \times n$ matrices with entries in $R$. In this paper, we initiate an analysis of two closely related linear groups: the special linear groups and the affine general linear groups. The special linear group is an important normal subgroup of $\GL_n(R)$ defined by \[\SL_n(R) = \{ M \in \GL_n(R) \, | \, \det(M) = 1\}.\] Since the determinant map has a right inverse, we have \[\GL_n(R) = \SL_n(R) \rtimes R^\times\] In \S \ref{sec:SL}, we will prove:

\begin{thm} Let $F$ be a finite field. 
\begin{enumerate}
    \item If $|F|$ is even, then $\SL_2(F)$ is realizable if and only if $F = \F_2$.
    \item If $|F|$ is odd and congruent to $\pm 1$ modulo 8, then $\SL_2(F)$ is not realizable.
    \item If $|F|$ is odd and congruent to $\pm 3$ modulo 8, then $\SL_2(F)$ is not the group of units in a finite ring.
    
\end{enumerate} \label{sl2main}
\end{thm}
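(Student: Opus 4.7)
My plan is to reduce each part to a Wedderburn--Malcev case analysis of a finite ring $R$ realizing $\SL_2(F)$, augmented in Part~(2) by a reduction from arbitrary rings to the finite case.

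For Part~(1), the identification $\SL_2(\F_2)=\GL_2(\F_2)\cong S_3$ realized by $M_2(\F_2)$ covers $|F|=2$. For $|F|=2^n$ with $n\geq 2$, the group $\SL_2(\F_{2^n})=\PSL_2(\F_{2^n})$ is nonabelian simple and is eliminated by the case analysis below (Cases~A and~B).

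For Parts~(2) and~(3), let $q=|F|$ be odd, suppose $R$ is finite with $R^\times\cong\SL_2(\F_q)$, and write $R/J=\prod_i M_{n_i}(\F_{q_i})$ by Wedderburn--Malcev. The kernel $1+J$ of $R^\times\twoheadrightarrow(R/J)^\times$ is a normal subgroup of $\SL_2(\F_q)$, hence for $q\geq 5$ equals $\{I\}$, $\{\pm I\}$, or the whole group (the additional normal subgroup $Q_8$ occurring at $q=3$ is eliminated by a parallel argument). \emph{Case A} ($1+J=\SL_2(\F_q)$): $R/J$ has trivial units so $R/J\cong\F_2^k$, forcing $\ch(R)$ and $|R^\times|=|J|$ to be powers of $2$, contradicting the odd part $q\geq 3$ of $|\SL_2(\F_q)|=q(q^2-1)$. \emph{Case B} ($J=0$): the group $\SL_2(\F_q)$ is directly indecomposable (its central extension of $\PSL_2(\F_q)$ by $\{\pm I\}$ is nonsplit because the Sylow $2$-subgroup is generalized quaternion), so one would need $\SL_2(\F_q)\cong\GL_n(\F_{q'})$; comparing abelianizations and using that $\SL_2(\F_q)$ is perfect for $q\geq 4$ forces $q'=2$, and order counts defeat every $n$. \emph{Case C} ($1+J=\{\pm I\}$, $|J|=2$): $(R/J)^\times\cong\PSL_2(\F_q)$, and the same indecomposability together with simplicity of $\PSL_2(\F_q)$ (for $q\geq 5$) force $\PSL_2(\F_q)\cong\SL_n(\F_2)$, which by the classical exceptional isomorphism occurs only for $q=7,\ n=3$. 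Since $7\not\equiv\pm 3\pmod 8$, Case C never arises under Part~(3)'s hypothesis, completing its proof. For Part~(2) at $q=7$, I rule out Case C by a bimodule argument: $J^2=0$ forces $J$ to be an $M_3(\F_2)$-bimodule, equivalently an $M_9(\F_2)$-module, so $\dim_{\F_2}J$ is a nonnegative multiple of $9$---contradicting $|J|=2$.

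To finish Part~(2), I reduce arbitrary rings to the finite case. The subring $R_0\subseteq R$ generated by $R^\times$ satisfies $R_0^\times=R^\times$ and is a quotient of the group ring $\Z[\SL_2(\F_q)]$, so $R_0$ is finite whenever $\ch(R)>0$ and the finite case analysis applies. If $\ch(R)=0$, I exploit the hypothesis $q\equiv\pm 1\pmod 8$: then $v_2(q^2-1)\geq 4$, so the Sylow $2$-subgroup of $\SL_2(\F_q)$ is generalized quaternion of order $\geq 16$ and contains an element $\zeta\in R^\times$ of order~$8$ with $\zeta^4=-1$. The element $s:=\zeta+\zeta^{-1}$ satisfies $s^2=2$, giving $\Z[\sqrt{2}]\hookrightarrow R$; Dirichlet's unit theorem then produces the infinite-order unit $1+\sqrt{2}\in\Z[\sqrt{2}]^\times\subseteq R^\times$, contradicting finiteness of $R^\times\cong\SL_2(\F_q)$. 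When $q\equiv\pm 3\pmod 8$ the Sylow $2$-subgroup is merely $Q_8$ and has no element of order~$8$, so the $\sqrt 2$ obstruction is unavailable; this is precisely why Part~(3) is restricted to finite rings. The principal obstacles I anticipate are the bimodule-dimension computation in Case~C and ensuring that the subring $R_0$ really suffices for the reduction to the finite case in characteristic zero.
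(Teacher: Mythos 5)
Your overall architecture (Wedderburn--Malcev on a finite ring, trichotomy for $1+J$ as a normal subgroup, a direct argument in place of the citation to the classification of realizable simple groups, and the characteristic-zero reduction via $s=\zeta+\zeta^{-1}$, $s^2=2$, $\Z[s]\cong\Z[\sqrt2]$ and the infinite-order unit $1+s$) is sound for Parts (1) and (2), and the $\sqrt2$ argument is a genuinely different and more self-contained route than the paper's, which instead uses the maximal abelian unit-circle subgroup $\UC(\F_q)\cong\C_{q\mp1}$ together with the Pearson--Schneider theorem that no cyclic group of order divisible by $8$ is realizable in characteristic zero. But Part (3) has a genuine gap at $q=3$ (note $3\equiv 3\ (8)$, so this case is squarely inside Part (3)). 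You dismiss the extra normal subgroup $\Q_8$ of $\SL_2(\F_3)$ ``by a parallel argument,'' but no parallel of Cases A--C disposes of the configuration $1+J\cong\Q_8$, $|J|=8$: there $(R/J)^\times\cong\C_3$, so $R/J\cong\F_4\times\F_2^k$ is a perfectly good semisimple quotient, and the resulting extension $1\to\Q_8\to R^\times\to\C_3\to1$ is exactly consistent with $\SL_2(\F_3)\cong\Q_8\rtimes\C_3$. Nor can one rule it out on the grounds that $\Q_8$ cannot be $1+J$: take $J$ with $\F_2$-basis $a,b,z$ and products $a^2=b^2=ab=z$, $ba=0$, $zJ=Jz=z^2=0$; then $J$ is a nilpotent ideal of order $8$ and $1+J$ is nonabelian of order $8$ with unique involution $1+z$, i.e.\ $1+J\cong\Q_8$. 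Eliminating this case requires substantive ring-theoretic work; indeed the paper's proof that $\SL_2(\F_3)$ is not the unit group of a finite ring is its most elaborate step, carried out inside quotients of the group rings $\Z_2[G]$ and $\Z_4[G]$ with a GAP/SageMath case check. As written, your proposal does not prove Part (3).

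Two smaller points. In Case C at $q=7$, the claim that $J$ is an $\mathbf{M}_3(\F_2)$-bimodule, hence of $\F_2$-dimension divisible by $9$, is false as stated: $R/J$ may contain extra $\F_2$ factors (invisible in the unit group), and $J$ may lie in an $\F_2$--$\F_2$ Peirce component; e.g.\ $R=\mathbf{M}_3(\F_2)\times\F_2[x]/(x^2)$ has a one-dimensional square-zero radical with $(R/J)^\times\cong\PSL_2(\F_7)$. The gap is repairable --- since $J^2=0$ you can lift the idempotent of the matrix block and show that if $J$ avoids that block then $R\cong\mathbf{M}_3(\F_2)\times R'$, so $R^\times$ would have $\PSL_2(\F_7)$ as a direct factor, which $\SL_2(\F_7)$ does not --- or simply replaced by the paper's argument that $\SL_2(\F_7)$ contains $\C_8$ as a maximal abelian subgroup while $\C_8$ is not realizable in characteristic $2$ or $4$. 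Finally, in Part (1) your appeal to Cases A and B presupposes a finite ring: you should first note that the trivial center forces $-1=1$, hence characteristic $2$, and then pass to the unit-generated subring (a quotient of $\F_2[G]$) before invoking the finite-ring analysis; the paper instead handles Part (1) uniformly by citing the classification of realizable finite simple groups.
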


\noindent Note that the last two items imply that $\SL_2(F)$ is not the group of units in a finite ring when $|F|$ is odd. The remaining open question here is whether $\SL_2(F)$ is realizable in characteristic zero when $|F| \equiv \pm 3\, (8).$ To give one positive example, consider the ring of real quaternions \[ \{ a + b \,\mathbf{i} + c\,\mathbf{j} + d\,\mathbf{k} \, | \, a, b, c, d \in \mathbf{R}\}\] ($\mathbf{R}$ denotes the real numbers). The ring of Hurwitz integers is the set of quaternions where each coordinate is either an integer or half an odd integer. It is well known that the group of units in the Hurwitz integers is $\SL_2(\F_3)$. We do not have an answer for any other values of $|F|$.

Rather than restricting the general linear group to the special linear group, one might instead extend it to the affine general linear group. Viewing the elements of $\GL_n(R)$ as invertible $R$-linear endomorphisms of $R^n$, the affine general linear group is defined by \[ \AGL_n(R) = \{x \mapsto ax + b \, | \, a \in \GL_n(R), b \in R^n\} = R^n \rtimes \GL_n(R).\] When the underlying additive abelian group $(R, +)$ is cyclic, this group happens to coincide with the holomorph of $(R, +)$. The {\bf holomorph} of a group $G$ is \[ \Hol(G) = G \rtimes \Aut(G).\] We have $\AGL_n(R) \leq \Hol(R^n),$ with equality if $(R, +)$ is cyclic. (Equality rarely occurs; for example, \[\AGL_1(\F_4) = \A_4 \lneq \Sym_4 = \Hol(\F_4) = \Hol(\Z_2 \times \Z_2),\] where $\F_4$ is the field of order 4, $\A_4$ is the alternating group of degree 4, and $\Sym_4$ is the symmetric group of degree 4.) 

In \S \ref{sec:AGL}, we will prove the following theorem summarizing which groups of the form 
$\AGL_1(\Z_n) = \Hol(\Z_n)$ are the group of units in a finite ring. We use the following notation: $\Z_n$ is the additive cyclic group of order $n$; $\C_n$ is the multiplicative cyclic group of order $n$; $\Sym_n$ is the symmetric group of degree $n$; $\D_n$ is the dihedral group of order $n$; $\mathbf{M}_n(R)$ is the ring of $n \times n$ matrices with entries in $R$; and $\upt_n(R)$ is the ring of upper triangular matrices in $\mathbf{M}_n(R)$.
\begin{thm} The table below is a complete list of the groups $\Hol(\Z_n)$ which occur as the group of units in a finite ring of characteristic $c$, with an example of such a ring in each case.
\begin{center}
\begin{tabular}{| l | l | l |}
\hline

$c = 1$ & $\Hol(\Z_1)$ & $\{0\}$ \\ \hline
$c = 2$ & $\Hol(\Z_1) = \C_1$ &  $\F_2$ \\
 & $\Hol(\Z_2) = \C_2$ &   $\F_2[x]/(x^2)$ \\
 & $\Hol(\Z_3) = \Sym_3 = \D_6$ &   $\mathbf{M}_{2}(\F_2)$ \\
 & $\Hol(\Z_4) = \D_8$ &   $\upt_3(\F_2)$\\
& $\Hol(\Z_6) = \D_{12}$ &   $\F_2[\dih_6]$ \\
 & $\Hol(\Z_{12}) = \D_8 \times \D_6$ & $\upt_3(\F_2)\times \mathbf{M}_{2}(\F_2)$  \\ \hline

$c = 3$ & $\Hol(\Z_2)$ & $\F_3$ \\
& $\Hol(\Z_6)$ & $\upt_2(\F_3)$ \\ \hline

$c = 4$ & $\Hol(\Z_2)$ & $\Z_4$ \\
& $\Hol(\Z_4)$ & $\mathrm{End}_{\Z}(\cyc_4 \times \cyc_2)$ \\
& $\Hol(\Z_6)$ & $\Z_4 \times \mathbf{M}_{2}(\F_2)$ \\ 
& $\Hol(\Z_{12})$ & $\mathbf{M}_{2}(\F_2) \times \mathrm{End}_{\Z}(\cyc_4 \times \cyc_2)$ \\ \hline

$c = 6$ & $\Hol(\Z_2)$ & $\Z_6$ \\
& $\Hol(\Z_6)$ & $\F_2 \times \upt_2(\F_3)$ \\ \hline
\end{tabular}
\end{center}

\label{aglmain}
\end{thm}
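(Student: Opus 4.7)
The plan is to split the theorem into a realizability direction (each listed ring realizes the stated $\Hol(\Z_n)$ in the stated characteristic) and a necessity direction (no other $\Hol(\Z_n)$ arises as the unit group of a finite ring, and no extra characteristics occur). For realizability, I would first record the CRT-style isomorphism $\Hol(\Z_{mn}) \cong \Hol(\Z_m) \times \Hol(\Z_n)$ whenever $\gcd(m,n) = 1$, which reduces group identifications to the primitive cases $n \in \{1, 2, 3, 4\}$: namely $\Hol(\Z_3) = \Sym_3 = \D_6$, $\Hol(\Z_4) = \D_8$, $\Hol(\Z_6) = \D_{12}$, and $\Hol(\Z_{12}) = \D_8 \times \D_6$. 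Using $(R_1 \times R_2)^\times = R_1^\times \times R_2^\times$, it then suffices to verify the unit groups of the primitive example rings in each listed characteristic: $\GL_2(\F_2) \cong \Sym_3$ is classical; $\upt_3(\F_2)^\times$ is a non-abelian group of order $8$ containing an element of order $4$, hence $\D_8$; $\F_2[\D_6]^\times \cong \D_{12}$ is the standard unit-group calculation for this small group algebra; $\upt_2(\F_3)^\times$ has order $12$ with an element of order $6$ and is non-abelian, hence $\D_{12}$; and $\mathrm{End}_\Z(\cyc_4 \times \cyc_2)^\times$ works out to $\D_8$ by realizing this ring as a $2 \times 2$ block with entries in $\Z_4, 2\Z_4, \Z_2, \Z_2$ and directly counting units.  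The cyclic and product entries are immediate.

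For necessity, assume $R$ is a finite ring with $R^\times \cong \Hol(\Z_n)$.  Decompose $R \cong \prod R_i$ into indecomposable finite rings; each $R_i$ has a unique maximal two-sided ideal $J_i$ with $R_i/J_i \cong \mathbf{M}_{m_i}(\F_{q_i})$, yielding a short exact sequence
\[
1 \longrightarrow 1 + J_i \longrightarrow R_i^\times \longrightarrow \GL_{m_i}(\F_{q_i}) \longrightarrow 1
\]
in which $1 + J_i$ is a normal $p_i$-subgroup of $R_i^\times$ (where $q_i$ is a power of $p_i$) and $\mathrm{char}(R_i) = p_i^{a_i}$.  Writing $\Hol(\Z_n) \cong \prod_p \Hol(\Z_{p^{e_p}})$ via CRT and matching the direct factors to the various $R_i^\times$'s reduces the problem to: which prime powers $p^e$ can have $\Hol(\Z_{p^e})$, or a piece of it combined with others across factors, appear as a direct factor of such an $R^\times$?

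The hard part will be ruling out the prime powers $p^e \notin \{1, 2, 3, 4\}$.  The leverage comes from the interplay of $1 + J_i$ with $\GL_{m_i}(\F_{q_i})$: the automorphism part $(\Z/p^e)^\times \leq \Hol(\Z_{p^e})$ of a matching factor must find room inside either $1 + J_i$ (if its order is a power of $p_i$) or the quotient $\GL_{m_i}(\F_{q_i})$, with sharp numerical consequences for $(m_i, q_i)$ that fail for $p \geq 5$ or for $e \geq 2$ outside the listed cases (the borderline $\Hol(\Z_8)$ and $\Hol(\Z_9)$ need finer Sylow and element-order bookkeeping).  Once $n$ is pinned to $\{1, 2, 3, 4, 6, 12\}$, the admissible characteristics in each row are then determined by appealing to the known classifications of realizable cyclic groups \cite{Gil63, PS70} and realizable dihedral groups \cite{CL17d} applied to the primitive factors of $\Hol(\Z_n)$, recovering exactly the entries of the table.
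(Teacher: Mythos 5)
Your realizability half is fine (and matches the paper, which leaves the verification of the table's examples to the reader), but the necessity half has both a factual error and, more seriously, a placeholder where the actual content of the theorem should be. The factual error: an indecomposable finite ring is not local, so your claim that each indecomposable factor $R_i$ has a unique maximal two-sided ideal with $R_i/J_i \cong \mathbf{M}_{m_i}(\F_{q_i})$ is false; in general $R_i/J_i$ is a product of several matrix rings (indeed $\upt_3(\F_2)$ and $\upt_2(\F_3)$ from the table are indecomposable with semisimple quotient a product of fields). This is repairable, but the exact sequence you build on it is not the one you wrote.

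The genuine gap is that the exclusion of all $\Hol(\Z_{p^e})$ with $p^e \notin \{2,3,4\}$ --- the heart of the theorem --- is reduced to ``sharp numerical consequences for $(m_i,q_i)$'' and ``finer Sylow and element-order bookkeeping,'' with no argument. Moreover the mechanism you propose (the automorphism part must ``find room inside'' $1+J_i$ or $\GL_{m_i}(\F_{q_i})$) is the wrong one: groups like $\Hol(\Z_5)$ or $\Hol(\Z_7)$ embed in many $\GL_m(\F_q)$, so mere room is never the obstruction. The paper's proof runs on four specific lemmas absent from your outline: (i) the center of $\Hol(\Z_n)$ is trivial or $\C_2$, which limits the characteristic to $2,3,4,6$ and, applied to the center of the Artin--Wedderburn quotient, forces every Wedderburn field to be $\F_2$ or $\F_3$, after which the normal-subgroup structure of $\GL_m(\F_2)$ and $\GL_m(\F_3)$ forces $n \mid 6$ when $J=0$; (ii) $\Z_n$ is a maximal abelian (self-centralizing) subgroup, hence realizable in the same characteristic, which via Pearson--Schneider rules out $8 \mid n$ in characteristic $2$ or $4$ and settles characteristic $3$; (iii) every nontrivial normal subgroup of $\Hol(\Z_n)$ meets $\Z_n$, so $1+J$ being a $p$-group forces $p \mid n$; (iv) every normal $2$-subgroup of $\Hol(\Z_{2m})$ or $\Hol(\Z_{4m})$ ($m$ odd) lies in the $\Hol(\Z_2)$ or $\Hol(\Z_4)$ summand, which is what lets one identify $(R/J)^\times = R^\times/(1+J)$ as $\Hol(\Z_m)$, $\C_2 \times \Hol(\Z_{2m})$, or $\C_2 \times \Hol(\Z_m)$ and feed it back into the $J=0$ analysis (which the paper states for $\C_2 \times \Hol(\Z_n)$ precisely for this purpose). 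Finally, your last step --- reading off the admissible characteristics by applying the cyclic and dihedral classifications to the primitive factors --- does not follow as stated, since a ring realizing a direct product need not decompose into rings realizing the factors; the paper instead gives separate arguments in characteristics $2/4$, $3$, and $6$.
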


\n We do not have a complete answer for rings of characteristic zero, but we are able to show that if $\Hol(\Z_n)$ is realizable in characteristic zero, then $n$ is twice an odd integer (see \S \ref{aglchar0}). Theorem \ref{aglmain} has the following interesting consequence.

\begin{thm} \label{neat}
    The following statements are equivalent for a positive integer $n$.
    \begin{enumerate}
        \item The integer $n$ is a divisor of $12.$ \label{equiv:1}
        \item The group $\Hol(\Z_n)$ is the group of units in a finite ring. \label{equiv:2}
        \item The unit group of $\Z_n[x]$ is an elementary abelian $2$-group {\em (}\cite{CM13}{\em )}. \label{equiv:3}
        \item The group $\Hol(\Z_n)$ is a direct product of dihedral groups. \label{equiv:4}
        \item The order of $\Hol(\Z_n)$ is a divisor of $48.$ \label{equiv:5}
    \end{enumerate}
\end{thm}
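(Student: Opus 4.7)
The plan is to use Theorem \ref{aglmain} as the main engine: it establishes (\ref{equiv:1})$\iff$(\ref{equiv:2}) immediately, since the values of $n$ appearing in the table are precisely the divisors of $12$. The equivalence (\ref{equiv:1})$\iff$(\ref{equiv:3}) is the cited theorem from \cite{CM13}. The forward implications (\ref{equiv:1})$\Rightarrow$(\ref{equiv:4}) and (\ref{equiv:1})$\Rightarrow$(\ref{equiv:5}) are read off the six rows of the table directly: each $\Hol(\Z_n)$ displayed there is visibly a direct product of dihedral groups (with the convention that $\C_2 = \D_2$ and that the empty product equals the trivial group), and $n\phi(n) \in \{1, 2, 6, 8, 12, 48\}$ all divide $48$. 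For (\ref{equiv:5})$\Rightarrow$(\ref{equiv:1}), the bound $n \mid n\phi(n) \mid 48$ leaves only the nine divisors of $48$ to test; direct computation eliminates $n \in \{8, 16, 24, 48\}$, whose values of $n\phi(n)$ are $32, 128, 192, 768$ respectively.

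The substantive step is (\ref{equiv:4})$\Rightarrow$(\ref{equiv:1}), which I would attack in two phases. First, an abelianization reduction: a short commutator calculation in $\Hol(\Z_n) = \Z_n \rtimes \Aut(\Z_n)$ yields $[(a,u),(b,v)] = ((1-v)a + (u-1)b, 1)$, so the commutator subgroup equals $I \times \{1\}$, where $I \leq \Z_n$ is the subgroup generated by $\{1 - u : u \in \Aut(\Z_n)\}$. This subgroup equals $\Z_n$ when $n$ is odd (since $1-2$ is then a unit modulo $n$) and $2\Z_n$ when $n$ is even (every automorphism is odd multiplication, and $u = -1$ contributes $2$). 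Hence $\Hol(\Z_n)^{ab}$ is either $\Aut(\Z_n)$ or $\C_2 \times \Aut(\Z_n)$. Because any direct product of dihedral groups has abelianization that is an elementary abelian $2$-group, hypothesis (\ref{equiv:4}) forces $\Aut(\Z_n)$ to have exponent dividing $2$; a routine prime-by-prime Sylow calculation shows this is equivalent to $n \mid 24$.

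Second, I would exclude $n = 8$ and $n = 24$ directly. For $n = 8$, $\Hol(\Z_8)$ has order $32$, is non-abelian, has center of order $2$, and has exponent $8$ (no element of order $16$). Enumerating non-abelian direct products of dihedral groups of order $32$ yields only the three candidates $\D_{32}$, $\D_{16} \times \C_2$, and $\D_8 \times \D_4$, whose centers have orders $2$, $4$, and $8$ respectively; the first is ruled out by possessing an element of order $16$, and the other two by their centers. For $n = 24 = 8 \cdot 3$, the Chinese remainder theorem gives $\Hol(\Z_{24}) \cong \Hol(\Z_8) \times \D_6$, and since $\D_6 \cong \Sym_3$ is indecomposable, the Krull--Schmidt theorem would force $\Hol(\Z_8)$ itself to be a direct product of dihedral groups, contradicting the $n = 8$ case. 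The main obstacle is the enumeration for $n = 8$; everything else is routine once the abelianization calculation has narrowed attention to $n \mid 24$.
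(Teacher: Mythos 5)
Your proposal is correct, and for the one substantive implication, (\ref{equiv:4})$\Rightarrow$(\ref{equiv:1}), it takes a genuinely different route from the paper. The paper argues via Proposition \ref{holfacts}(\ref{holdecomp}): using the decomposition $\D_{2n}\cong\D_2\times\D_n$ ($n=2k$, $k$ odd) from \cite{CL17d} together with Krull--Schmidt and Mills' indecomposability of $\Hol(\Z_{p^k})$, it reduces the question to deciding when a single $\Hol(\Z_{p^k})$ is itself dihedral, and settles that by combining the index-$2$ cyclic subgroup of a dihedral group with the fact that $\Z_{p^k}$ is a maximal abelian subgroup (Proposition \ref{holfacts}(\ref{holmaxab})), yielding $p^k\in\{2,3,4\}$. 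You instead compute the commutator subgroup of $\Hol(\Z_n)$ directly (your formula $[(a,u),(b,v)]=((1-v)a+(u-1)b,1)$ checks out, and in $\Z_n$ the additive subgroup generated by the $1-u$ is the ideal they generate), conclude that the abelianization is $\Aut(\Z_n)$ or $\C_2\times\Aut(\Z_n)$, and use the elementary abelian abelianization of any product of dihedral groups to force $\Z_n^\times$ to have exponent $2$, i.e.\ $n\mid 24$; you then kill $n=8$ by a short enumeration (your three isomorphism types $\D_{32}$, $\D_{16}\times\C_2$, $\D_8\times\D_4$ do exhaust the non-abelian order-$32$ products, since $\D_8\times\D_2\times\D_2\cong\D_8\times\D_4$, and the center/exponent invariants of $\Hol(\Z_8)$ rule them out) and kill $n=24$ by Krull--Schmidt, reducing to the $n=8$ case. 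Your route is more computational but avoids the maximal-abelian-subgroup argument and needs Mills' theorem less centrally (only Krull--Schmidt plus the dihedral decomposition fact at the $n=24$ step), at the cost of the ad hoc order-$32$ enumeration; the paper's route is uniform over all prime powers and produces the cleaner statement ``$\Hol(\Z_{p^k})$ is dihedral iff $p^k\in\{2,3,4\}$.'' Your treatment of (\ref{equiv:1})$\Leftrightarrow$(\ref{equiv:2}), (\ref{equiv:3}), (\ref{equiv:5}) matches the paper's, up to the harmless variant of testing all divisors of $48$ rather than first invoking that $\varphi(n)$ is even for $n\ge 3$ (and note $48$ has ten divisors, not nine --- a slip with no effect on the argument).
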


\n Theorem \ref{aglmain} implies that (\ref{equiv:1}) and (\ref{equiv:2}) are equivalent; (\ref{equiv:1}) and (\ref{equiv:3}) are equivalent by \cite{CM13}. We will prove that (\ref{equiv:4}) and (\ref{equiv:5}) are each equivalent to (\ref{equiv:1}) at the end of \S 3.1, after we have collected several facts about $\Hol(\Z_n)$.

\section{Special linear groups} \label{sec:SL}

First, we summarize a few well-known facts about the special linear group. For a positive prime power $q = p^k$, let $\F_q$ denote the finite field of order $q$.

\begin{prop}
Let $m$ be a positive integer and let $q = p^k$ be a positive prime power. \label{slfacts}
\begin{enumerate}
    \item The center of $\SL_m(\F_q)$ is cyclic of order $\gcd(m, q-1)$. \label{slcenter}
    \item The only proper, non-trivial, normal subgroup of $\GL_2(\F_2) = \SL_2(\F_2) = \Sym_3$ is $\C_3.$ \label{sl22normal}
    \item The proper, non-trivial, normal subgroups of  $\GL_2(\F_3)$ are $\SL_2(\F_3)$, $\Q_8$, and $\C_2$. \label{gl23normal}
    \item The proper, non-trivial, normal subgroups of $\SL_2(\F_3)$ are $\Q_8$ and $\C_2$. \label{sl23normal}
    \item For $q > 3$, the normal subgroups of $\GL_2(\F_q)$ are the subgroups of the center and the subgroups containing $\SL_2(\F_q)$. \label{gl2qnormal}
    \item For $q > 3$, the only proper, non-trivial normal subgroup of $\SL_2(\F_q)$ is its center $\C_2$. \label{sl2qnormal}
\end{enumerate}
\end{prop}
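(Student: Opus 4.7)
The plan is to prove the six items in sequence, using standard structure theory for classical groups over finite fields; the main external ingredient I would cite is the classical simplicity of $\PSL_2(\F_q)$ for $q \geq 4$.

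For (1), any central element of $\SL_m(\F_q)$ commutes with each elementary transvection $I + E_{ij}$ (for $i \neq j$), which forces it to be a scalar matrix $\lambda I$; the determinant condition $\lambda^m = 1$ then picks out the $m$th roots of unity in the cyclic group $\F_q^\times$ of order $q-1$, giving a cyclic subgroup of order $\gcd(m, q-1)$. For (2), the group $\GL_2(\F_2)$ acts faithfully on the three nonzero vectors of $\F_2^2$, yielding the familiar isomorphism $\GL_2(\F_2) \cong \Sym_3$; triviality of $\F_2^\times$ makes this coincide with $\SL_2(\F_2)$, and the normal subgroup lattice of $\Sym_3$ is $\{e\} \lhd \A_3 \lhd \Sym_3$. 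For (3) and (4), I would exploit the exceptional isomorphism $\PSL_2(\F_3) \cong \A_4$: the group $\SL_2(\F_3)$ is the binary tetrahedral group with center $\{\pm I\} \cong \C_2$ and quotient $\A_4$. Via the correspondence theorem, normal subgroups of $\SL_2(\F_3)$ containing the center correspond to the normal subgroups of $\A_4$ (namely $\{e\}$, the Klein four group, and $\A_4$), lifting to $\C_2$, $\Q_8$ (the preimage of the Klein four group, which is the normal Sylow $2$-subgroup), and $\SL_2(\F_3)$. A proper nontrivial normal subgroup not containing the center would meet it trivially, a possibility ruled out by inspection of this order-$24$ group. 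For (3), any $N \lhd \GL_2(\F_3)$ intersects $\SL_2(\F_3)$ in one of the normal subgroups just listed, and combining this with $\det(N) \leq \F_3^\times \cong \C_2$ yields the classification by a short case analysis.

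For (5) and (6), set $Z := Z(\SL_2(\F_q))$. For (6), if $N \lhd \SL_2(\F_q)$ is proper and nontrivial, then $NZ/Z \lhd \PSL_2(\F_q)$ is either trivial or everything; the latter gives $NZ = \SL_2(\F_q)$, whence $[\SL_2(\F_q), \SL_2(\F_q)] = [N,N] \subseteq N$, and perfectness of $\SL_2(\F_q)$ for $q > 3$ contradicts $N \neq \SL_2(\F_q)$. Hence $N \subseteq Z$, giving $N = Z$. For (5), let $N \lhd \GL_2(\F_q)$; by (6), $N \cap \SL_2(\F_q)$ is either all of $\SL_2(\F_q)$ (so $N$ belongs to the list of subgroups containing $\SL_2(\F_q)$) or contained in $Z$. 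In the second case, $[N, \GL_2(\F_q)] \subseteq N \cap \SL_2(\F_q) \subseteq Z$, so for each fixed $n \in N$ the map $g \mapsto [n,g]$ is a homomorphism $\GL_2(\F_q) \to Z$ (the target being central in $\GL_2(\F_q)$). It factors through $\GL_2(\F_q)^{\mathrm{ab}} \cong \F_q^\times$ (using $[\GL_2, \GL_2] = \SL_2(\F_q)$ for $q > 3$), so depends only on $\det(g)$; evaluating on any transvection gives $[n, t] = 1$, hence $n$ centralizes every transvection and therefore all of $\SL_2(\F_q)$, forcing $n$ to be scalar and $N \subseteq Z(\GL_2(\F_q))$. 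The main technical obstacle here is (5) for $q$ odd: since $\PSL_2(\F_q)$ is only an index-$2$ subgroup of $\PGL_2(\F_q)$ in that case, one cannot directly transfer simplicity from $\PSL_2$ to $\PGL_2$, and the commutator-to-character argument above is needed to bridge from $\SL_2$ up to $\GL_2$.
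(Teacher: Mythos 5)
The paper offers no proof of this proposition at all: it is stated as a summary of well-known facts about $\SL_2$ and $\GL_2$ over small and general finite fields, so there is no argument of the authors' to compare yours against. Your proposal is a correct and standard derivation of these facts, and the overall architecture (transvections for the center; $\GL_2(\F_2) \cong \Sym_3$; the correspondence with $\A_4$ for $q=3$; simplicity of $\PSL_2(\F_q)$ plus perfectness of $\SL_2(\F_q)$ for $q>3$; the commutator-to-character trick to pass from $\SL_2$ to $\GL_2$) is exactly how one would expect these facts to be proved. A few spots are left as sketches and would need to be filled in for a complete write-up: in (1) the transvection argument requires $m \geq 2$ (the case $m=1$ is trivial); the ``inspection'' in (4) is most cleanly done by noting that $-I$ is the unique involution of $\SL_2(\F_3)$, so every normal subgroup of even order contains the center, while a normal subgroup of order $3$ would be a normal Sylow $3$-subgroup, which does not exist since the quotient $\A_4$ has four Sylow $3$-subgroups; and in the case analysis for (3) you should explicitly exclude normal subgroups of order $16$ (over $\Q_8$) and of order $4$ (over $\C_2$), which is quickest by observing that their images would be normal subgroups of order $8$ or $2$ in $\PGL_2(\F_3) \cong \Sym_4$, which has none. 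Also note that as literally stated, item (6) concerns odd $q$ (for even $q>3$ the center is trivial and $\SL_2(\F_q)$ is simple); your argument handles both situations, and you might flag the trivial converse direction in (5) (central subgroups and subgroups containing $\SL_2(\F_q)$ are indeed normal, the latter because $\GL_2(\F_q)/\SL_2(\F_q)$ is abelian).
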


The next two propositions narrow the possible characteristics of a ring that realizes a finite special linear group.

\begin{prop}
    If $\SL_2(\F_q)$ is the group of units in a ring $R$, then the characteristic of $R$ is $2$ if $q$ is even and the characteristic of $R$ is $0, 2, 3, 4$ or $6$ if $q$ is odd. \label{sl2char}
\end{prop}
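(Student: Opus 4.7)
The plan is to exploit the fact that, for any ring $R$ of characteristic $c$, the prime subring $\Z/c\Z \hookrightarrow R$ (interpreted as $\Z$ when $c=0$), and the induced inclusion on unit groups $(\Z/c\Z)^\times \hookrightarrow R^\times$ lands inside the center of $R^\times$, since integer multiples of $1_R$ commute with every element of $R$. Specialising to $R^\times = \SL_2(\F_q)$ gives an embedding
\[
(\Z/c\Z)^\times \hookrightarrow Z(\SL_2(\F_q)),
\]
where the case $c=0$ supplies the central subgroup $\{\pm 1\}$ of order $2$ coming from $\Z^\times$.

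By Proposition~\ref{slfacts}(\ref{slcenter}), the right-hand side is cyclic of order $\gcd(2,q-1)$, which equals $1$ when $q$ is even and $2$ when $q$ is odd. Hence $|(\Z/c\Z)^\times| \leq 2$, and a direct enumeration of Euler's totient shows that $\phi(c) \leq 2$ holds precisely for $c \in \{1,2,3,4,6\}$. The case $c=1$ is ruled out because then $R$ is the zero ring and has trivial unit group, whereas $|\SL_2(\F_q)| = q(q^2-1) \geq 6$.

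To finish, I split on the parity of $q$. When $q$ is even, the center of $\SL_2(\F_q)$ is trivial, so we need $|(\Z/c\Z)^\times|=1$, leaving only $c=2$; characteristic zero is also excluded, because in characteristic zero $-1_R \neq 1_R$ would be a central element of order $2$ in $R^\times$, contradicting the triviality of $Z(\SL_2(\F_q))$. When $q$ is odd, each of the remaining values $c \in \{0,2,3,4,6\}$ is compatible with the embedding into a cyclic group of order $2$, which is precisely the list in the proposition. I do not anticipate a significant obstacle: the argument reduces to the single observation that the prime subring contributes a central subgroup of the unit group, combined with the known order of $Z(\SL_2(\F_q))$.
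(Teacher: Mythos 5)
Your proposal is correct and follows essentially the same route as the paper: the characteristic (prime) subring is central, so its unit group embeds in $Z(\SL_2(\F_q))$, and then the computation of that center together with the enumeration $\phi(c)\leq 2$ gives exactly the stated list, with $c=0$ excluded for even $q$ because $-1$ would be a nontrivial central unit. No gaps; the extra care you take with the $c=0$ and $c=1$ cases matches what the paper does implicitly.
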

\begin{proof}
    Suppose $R$ is a ring of characteristic $c$ that realizes $G = \SL_2(\F_q)$. Note that $c \neq 1$ since $R$ cannot be the zero ring. The characteristic subring $\Z_c$ of $R$ lies in the center of $R$, and $\Z_c^\times$ lies in the center of $G$. By Proposition \ref{slfacts} (\ref{slcenter}), the center of $G$ is either trivial (when $q$ is even) or $\Z_2$ (when $q$ is odd). Since $|\Z_c^\times| = 1$ if and only if $c = 2$ and $|\Z_c^\times| = 2$ if and only if $c \in \{0,3,4,6\}$, the proof is complete.
 \end{proof}

\begin{prop}
    If $m \geq 2$ and $\gcd(q-1, m)  = 1$, then $\SL_m(\F_q)$ is realizable if and only if $q = 2$. \label{mqm1}
\end{prop}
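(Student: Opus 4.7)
The plan is as follows. The ``if'' direction is immediate: $\SL_m(\F_2)=\GL_m(\F_2)$ is realized by $\mathbf{M}_m(\F_2)$. For the converse, suppose $R$ realizes $G=\SL_m(\F_q)$ with $m\geq 2$, $q>2$, and $\gcd(q-1,m)=1$. By Proposition~\ref{slfacts}(\ref{slcenter}) the center of $G$ is trivial, and the proof of Proposition~\ref{sl2char} (applied with $m$ in place of $2$) then forces $\mathrm{char}(R)=2$. I would then replace $R$ by the subring it generates together with $G$: since $G$ is closed under multiplication and $R$ has characteristic $2$, this subring equals the $\F_2$-linear span of $G$, which is finite, and its unit group is still $G$. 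Thus we may assume $R$ is a finite $\F_2$-algebra.

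The core of the argument is a Wedderburn reduction. Under the hypothesis $G=\PSL_m(\F_q)$ is a nonabelian finite simple group, since the only non-simple cases $(m,q)=(2,2)$ and $(2,3)$ are excluded by $q>2$ and by $\gcd(2,2)=2\neq 1$, respectively. By Wedderburn--Artin, write $R/J(R)\cong\prod_i\mathbf{M}_{n_i}(\F_{2^{k_i}})$; the surjection $G=R^\times\twoheadrightarrow(R/J)^\times$ has kernel $1+J$, which is a finite $2$-group since $(1+j)^{2^t}=1+j^{2^t}$ in characteristic $2$ and $J$ is nilpotent. Because $|G|$ has odd prime divisors whenever $q>2$, this normal subgroup cannot equal all of $G$, so simplicity forces it to be trivial. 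Hence $G\cong\prod_i\GL_{n_i}(\F_{2^{k_i}})$, which collapses to a single nontrivial factor $\GL_n(\F_{2^k})$; the determinant exhibits $\F_{2^k}^\times$ as a quotient, and perfection of $G$ then forces $k=1$. So $G\cong\SL_n(\F_2)$ for some $n\geq 3$.

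The main obstacle is the final step: deducing a contradiction from $\SL_m(\F_q)\cong\SL_n(\F_2)$ with $q>2$. I would invoke the classical classification of isomorphisms among projective special linear groups, which asserts that $\PSL_m(\F_q)\cong\PSL_n(\F_r)$ forces $(m,q)=(n,r)$ outside the short list of exceptional coincidences $\PSL_2(\F_4)\cong\PSL_2(\F_5)\cong\A_5$, $\PSL_2(\F_7)\cong\PSL_3(\F_2)$, $\PSL_2(\F_9)\cong\A_6$, and $\PSL_4(\F_2)\cong\A_8$. Since $G=\PSL_m(\F_q)$ and the other side is $\PSL_n(\F_2)$, the only exception that could intrude is $\PSL_2(\F_7)\cong\PSL_3(\F_2)$, corresponding to $(m,q)=(2,7)$, but this is ruled out by $\gcd(6,2)=2\neq 1$. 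Hence $(m,q)=(n,2)$, contradicting $q>2$. A self-contained alternative is to compare the $2$-parts and odd parts of $|\SL_m(\F_q)|$ and $|\SL_n(\F_2)|$, invoking Zsygmondy's theorem for the generic case and checking the few exceptions by hand.
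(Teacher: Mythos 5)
Your proof is correct, but it takes a more self-contained route than the paper. The paper's argument is a two-step reduction: the gcd hypothesis makes the center trivial, so $\SL_m(\F_q)=\PSL_m(\F_q)$ is simple (the excluded cases $(m,q)=(2,2),(2,3)$ being handled by the hypotheses), and then it simply cites the Davis--Occhipinti classification of realizable finite simple groups \cite{DO14a} to conclude $q=2$. You instead essentially re-derive the needed special case of that classification: reduce to a finite $\F_2$-algebra via the trivial-center/characteristic argument and the subring generated by the units, apply Artin--Wedderburn, use simplicity to kill $1+J(R)$ and collapse the product to a single factor $\GL_n(\F_{2^k})$, use perfectness to force $k=1$, and finish with the classical classification of isomorphisms among the groups $\PSL_m(\F_q)$, noting that the only exceptional coincidence that could interfere, $\PSL_2(\F_7)\cong\PSL_3(\F_2)$, is excluded by $\gcd(6,2)\neq 1$. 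What your approach buys is independence from \cite{DO14a} (modulo standard structure theory plus Artin's isomorphism theorem) and an explicit treatment of the exceptional isomorphisms, a point the paper leaves implicit in its citation; what the paper's approach buys is brevity, since the cited classification absorbs the entire Wedderburn analysis and the characteristic reduction in one stroke. Each step of your argument checks out: the unit group of the subring generated by $G$ is indeed $G$, units lift along $R\to R/J(R)$, $1+J(R)$ is a normal $2$-group in characteristic $2$ with $J$ nilpotent, and a nonabelian simple group can be neither a $2$-group nor a nontrivial direct product, so the chain $\SL_m(\F_q)\cong\GL_n(\F_{2^k})\cong\SL_n(\F_2)$ with $n\geq 3$ and the final contradiction are all sound.
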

\noindent Note that $\SL_m(\F_2)$ is only realizable in characteristic 2 by Proposition \ref{sl2char}.
\begin{proof}
    For $m \geq 2$, $\SL_m(\F_2) = \GL_m(\F_2)$ is realizable in characteristic 2 via the ring of $m \times m$ matrices with entries in $\F_2$. On the other hand, if $\gcd(q-1, m) = 1$, $\SL_m(\F_q)$ has trivial center and hence $\SL_m(\F_q) = \PSL_m(\F_q).$ With the exception of $m = 2$ and $q \in \{2,3\},$ this group is simple. We cannot have $m = 2$ and $q = 3$ since in this case $\gcd(m, q-1) \neq 1.$ Thus, if $\SL_m(\F_q)$ is realizable, then $m \geq 2$ and $q = 2$ by the classification of realizable finite simple groups in \cite{DO14a}. 
\end{proof}

Before turning our attention to finite rings, we record one more result used freely in this section.

\begin{prop}[{\cite[2.2]{CL19}}]
    If a group $G$ is realizable in characteristic $n,$ then so are all of its maximal abelian subgroups (in characteristic $n$).
\end{prop}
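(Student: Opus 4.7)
The plan is to realize a maximal abelian subgroup $A$ of $G = R^\times$ by taking the centralizer subring $S = C_R(A) = \{r \in R : ra = ar \text{ for all } a \in A\}$. First I would verify that $S$ is a unital subring of $R$: closure under the ring operations is routine, $1 \in S$ because the identity commutes with everything, and consequently the characteristic of $S$ equals that of $R$, namely $n$.

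Next I would compute the unit group of $S$. The inclusion $S^\times \subseteq C_G(A)$ is immediate. Conversely, if $u \in R^\times$ commutes with every $a \in A$, then left- and right-multiplying $ua = au$ by $u^{-1}$ shows that $u^{-1}$ also centralizes $A$, so $u$ is a unit in $S$. Hence $S^\times = C_G(A)$.

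The maximality hypothesis enters only in the last step, where I would show $C_G(A) = A$: for any $g \in C_G(A)$, the subgroup $\langle A, g \rangle$ of $G$ is abelian, since $g$ commutes with every element of $A$ and with itself, so by maximality $\langle A, g \rangle = A$ and $g \in A$. Combining these observations yields a ring $S$ of characteristic $n$ realizing $A$. The argument is essentially a single centralizer computation and I do not anticipate any real technical obstruction; the only point worth flagging is the fact that inversion preserves centralization, which is immediate from the defining equation $ua = au$.
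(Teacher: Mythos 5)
Your argument is correct: the centralizer subring $S = C_R(A)$ is unital of the same characteristic, its unit group is $C_G(A)$, and maximality of $A$ forces $C_G(A) = A$. The paper states this result without proof, citing \cite{CL19}, and your centralizer-subring computation is precisely the standard argument given in that reference, so there is nothing to add.
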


\subsection{Finite rings}

As an immediate consequence of Proposition \ref{mqm1}, we have the following corollary.

\begin{cor}
    The group $\SL_2(\F_{2^k})$ is realizable in characteristic $c$ if and only if $k = 1$ and $c = 2$. \label{slcor}
\end{cor}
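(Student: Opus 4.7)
The plan is to derive the corollary directly from the two propositions we just proved. The statement has two implications, and both are essentially one-line deductions once we package everything correctly.

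First I would verify that Proposition \ref{mqm1} applies to $\SL_2(\F_{2^k})$: here $m = 2$ and $q = 2^k$, so $q - 1 = 2^k - 1$ is odd, giving $\gcd(q-1, m) = \gcd(2^k - 1, 2) = 1$. Proposition \ref{mqm1} therefore tells us that $\SL_2(\F_{2^k})$ is realizable (in any characteristic) if and only if $q = 2$, i.e., $k = 1$. This handles the question of \emph{which} of these groups are realizable at all.

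Next, to pin down the characteristic, I would invoke Proposition \ref{sl2char}. Since $q = 2$ is even, that proposition forces the characteristic $c$ of any realizing ring to be $2$. Conversely, when $k = 1$ and $c = 2$, the ring $\mathbf{M}_2(\F_2)$ realizes $\SL_2(\F_2) = \GL_2(\F_2)$ in characteristic $2$, establishing the existence needed for the ``if'' direction.

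There is no real obstacle here; the work has already been done in Propositions \ref{mqm1} and \ref{sl2char}. The only thing to be careful about is that Proposition \ref{mqm1} as stated does not restrict the characteristic, so combining it with Proposition \ref{sl2char} is what lets us simultaneously conclude $k = 1$ and $c = 2$. The proof should be at most two or three sentences.
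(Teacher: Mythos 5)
Your proposal is correct and matches the paper exactly: the paper states the corollary as an immediate consequence of Proposition \ref{mqm1}, with the characteristic pinned down by Proposition \ref{sl2char} (noted right after Proposition \ref{mqm1}), which is precisely your two-step deduction. Nothing is missing.
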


\noindent Thus, to complete the proof of Theorem \ref{sl2main} for finite rings, we must prove that $\SL_2(F)$ is not realizable by a finite ring when $F$ has odd characteristic. Rings of characteristic zero are addressed in \S \ref{sl2char0}.

Consider $\SL_2(\F_q)$ when $q$ is an odd prime power. If such a group is realizable by a finite ring $R$, then $R$ has characteristic 2, 3, 4, or 6. For $q > 3,$ the only proper, non-trivial, normal subgroup of $\SL_2(\F_q)$ is its center $\C_2$, and since the center is not a summand, the group $\SL_2(\F_q)$ is indecomposable (it is not isomorphic to a direct product of two nontrivial groups). Further, the binary tetrahedral group $\SL_2(\F_3)$ is indecomposable. Thus, since every ring of characteristic 6 is the direct product of a ring of characteristic 2 with a ring of characteristic 3, we may assume $R$ has characteristic 2, 3, or 4.

Next, we reduce to the case $q = 3$ and then show that $\SL_2(\F_3)$ is not the group of units in a finite ring.

\begin{prop}
    Let $q$ be an odd prime power. If $\SL_2(\F_q)$ is the group of units in a finite ring, then $q = 3$ and $c = 2$ or $c = 4$.
\end{prop}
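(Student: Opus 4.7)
The paper has already reduced to $c \in \{2, 3, 4\}$. The plan is to analyse $R$ via its Jacobson radical $J = J(R)$ and the short exact sequence
\[
1 \longrightarrow 1 + J \longrightarrow R^\times \longrightarrow (R/J)^\times \longrightarrow 1.
\]
Since $R$ is finite, $J$ is nilpotent, so $|1 + J| = |J|$ is a power of $p$ (with $p = 2$ for $c \in \{2,4\}$ and $p = 3$ for $c = 3$); meanwhile $R/J$ is semisimple, so $(R/J)^\times$ is a direct product of groups $\GL_{n_i}(\F_{q_i})$.

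For $q > 3$, Proposition~\ref{slfacts}(\ref{sl2qnormal}) forces $1 + J \in \{\{1\}, \C_2\}$, since $\SL_2(\F_q)$ is not a $p$-group. If $1 + J = \{1\}$, then $R$ is semisimple and indecomposability gives $R = M_n(\F_{q'})$ with $\GL_n(\F_{q'}) \cong \SL_2(\F_q)$; but $\SL_2(\F_q)$ is perfect for $q > 3$, while $\GL_n(\F_{q'})$ is perfect only when $q' = 2$ and $n \geq 3$, whereupon $\GL_n(\F_2) = \PSL_n(\F_2)$ is simple---contradicting the presence of the centre $\C_2$ in $\SL_2(\F_q)$. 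If $1 + J = \C_2$, then $(R/J)^\times \cong \PSL_2(\F_q)$ is simple, so $R/J = M_n(\F_{q'})$ with $\GL_n(\F_{q'}) \cong \PSL_2(\F_q)$; the same perfection/centre argument forces $q' = 2$ and $n \geq 3$, and an order comparison singles out only $n = 3$, $q = 7$ (via the classical $\PSL_2(\F_7) \cong \GL_3(\F_2)$).

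The main obstacle is eliminating this last configuration. Here $|J| = 2$ and $J^2 = 0$ (nilpotence together with $|J| = 2$), so $J$ is an $(R/J)$-bimodule of $\F_2$-dimension $1$---equivalently, a left module over $M_3(\F_2) \otimes_{\F_2} M_3(\F_2)^{\mathrm{op}} \cong M_9(\F_2)$. But every nonzero module over the simple $\F_2$-algebra $M_9(\F_2)$ has $\F_2$-dimension a multiple of $9$, yielding the desired contradiction.

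For $q = 3$ and $c = 3$: Proposition~\ref{slfacts}(\ref{sl23normal}) shows that the only normal $3$-subgroup of $\SL_2(\F_3)$ is trivial, so $J = 0$ and $R = M_n(\F_{3^k})$ by indecomposability. A direct order comparison ($|\SL_2(\F_3)| = 24$ while $|\GL_2(\F_3)| = 48$, with the remaining $n$ and $k$ failing trivially) then rules out every possibility, leaving $c \in \{2, 4\}$ as claimed.
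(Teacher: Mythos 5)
Your overall architecture (split on $|1+J|\in\{1,2\}$, Artin--Wedderburn, perfectness of $\SL_2(\F_q)$ for $q>3$, reduction to $\PSL_2(\F_7)\cong\GL_3(\F_2)$, and a separate easy argument for $q=3$, $c=3$) is sound and close in spirit to the paper's, but the step you use to kill the last configuration has a genuine gap. From the simplicity of $(R/J)^\times\cong\PSL_2(\F_7)$ you conclude that $R/J=\mathbf{M}_n(\F_{q'})$ as a ring. That does not follow: a finite semisimple ring with indecomposable (or simple) unit group can have additional factors with trivial unit group, i.e.\ $R/J\cong\mathbf{M}_3(\F_2)\times\F_2^s$ with $s\geq 1$ is not excluded (compare $R=\mathbf{M}_3(\F_2)\times\F_2$, whose unit group is still $\GL_3(\F_2)$). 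This is harmless in your semisimple case, where only the unit group is used, but it is fatal to the bimodule count: if $s\geq 1$, then $(R/J)\otimes_{\F_2}(R/J)^{\mathrm{op}}$ is not $\mathbf{M}_9(\F_2)$ but has $\F_2\otimes\F_2\cong\F_2$ factors, and the one-dimensional module $J\cong\F_2$ (with both actions through an $\F_2$ coordinate) is perfectly possible, so no contradiction arises exactly in the case you need to rule out.

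The argument can be repaired, but it needs more than a dimension count: since the $\mathbf{M}_3(\F_2)$ part must annihilate $J$ on both sides, lifting the corresponding central idempotent $\bar e$ of $R/J$ to an idempotent $e\in R$ (possible as $J^2=0$) gives $eJ=Je=0$, hence $eR(1-e)=(1-e)Re=0$ and $R\cong \mathbf{M}_3(\F_2)\times (1-e)R(1-e)$; then $R^\times\cong\GL_3(\F_2)\times\bigl((1-e)R(1-e)\bigr)^\times$, contradicting the indecomposability of $\SL_2(\F_7)$ (and $|\SL_2(\F_7)|=336\neq 168$). The paper avoids this entirely with a softer weapon: $\SL_2(\F_7)$ contains $\C_8$ as a maximal abelian subgroup, and $\C_8$ is not the unit group of any ring of characteristic $2$ or $4$, so $\SL_2(\F_7)$ is not realizable in those characteristics. (Your treatment of $q=3$, $c=3$ by order comparison is fine and essentially equivalent to the paper's normal-subgroup argument; your identification of the exceptional case $(q,n)=(7,3)$ implicitly uses the standard classification of isomorphisms among these simple groups, which the paper cites explicitly.)
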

\begin{proof}
    Suppose $R$ is a finite ring with group of units $G = \SL_2(\F_q)$, where $q = p^k > 3$ is an odd prime power. By the above discussion, we may assume $R$ has characteristic $c = 2,3,$ or $4$. 

    Let $J$ denote the Jacobson radical of $R$. Since $1 + J$ is a normal subgroup of $G$, we must have $|J| \in \{1, 2, q^3 - q\}.$ On the other hand, $|J|$ must be a power of 2 or a power of 3. Since $q^3 - q = q(q^2 -1)$ is never a prime power, we must have $|J| = 1$ and $c \in \{2,3\}$ (when $R$ has characteristic 4, we have $2 \in J$), or $|J| = 2$ and $c\in\{2,4\}$.
    
    If $J$ is trivial and $c = 2$ or $c = 3$, then $R$ is isomorphic to a finite product of matrix rings of the form $\mathbf{M}_n(\F_{c^r})$ by the Artin-Wedderburn Theorem. Since $\SL_2(\F_q)$ is indecomposable, we must have \[\SL_2(\F_q) = \GL_n(\F_{c^r}).\] Equating the centers of both sides of the above equation, we obtain $\C_2 = \C_{c^r - 1}$, forcing $c^r = 3$. But now $\SL_n(\F_3)$ is a proper normal subgroup of $\SL_2(\F_q)$, forcing $\SL_n(\F_3) \leq \C_2$ and $n = 1$. This implies $\SL_2(\F_q) = \GL_1(\F_3) = \C_2$, a contradiction. 
    
    If $|J| = 2$ and $c = 2$ or $c = 4$, then $R/J$ is a ring of characteristic 2 with unit group $\PSL_2(\F_q)$. Since $q > 3$, this group is a nonabelian finite simple group, and the only realizable nonabelian finite simple groups are the groups $\PSL_m(\F_2)$ for $m \geq 3$ (\cite{DO14a}). There is an exceptional isomorphism $\PSL_2(\F_q) \cong \PSL_m(\F_2)$ only if $q = 7$ and $m = 3$. However, we claim that $\SL_2(\F_7)$ cannot be the group of units in a ring of characteristic 2 or 4. Indeed, $\SL_2(\F_7)$ has a maximal abelian subgroup isomorphic to $\C_8$. If $R$ realizes $\SL_2(\F_7)$, then the subring of $R$ generated by $\C_8$ has unit group $\C_8$. But $\C_8$ is not the group of units in a ring of characteristic 2 or 4 (see \cite{CL15}).

    Finally, we must prove that $G = \SL_2(\F_3)$ is not realizable in characteristic $c = 3$. In this case, $|J| \in \{1, 2, 8, 24\}.$ Thus, if $G$ is the group of units in a ring $R$ of characteristic 3, we must have $|J| = 1$. Arguing as above, this implies $G = \GL_n(\F_3)$ has proper normal subgroup $\SL_n(\F_3)$. However, $\SL_n(\F_3)$ cannot be $\Q_8$ or $\C_2$, so $n = 1$ and $G = \C_2$, a contradiction.\end{proof}
In the next proposition we prove that $\SL_2(\F_3)$ is not the group of units in a finite ring. Our proof relies on certain computations done in SageMath (\cite{sagemath}) with GAP (\cite{GAP4}). Our computations for the characteristic 2 case may be found in \cite[\href{https://cocalc.com/klockrid/main/SL2F3/files/SL(2,3)-Char-2.ipynb}{SL(2,3)-Char-2.ipynb}]{ourcode}; our computations for the characteristic 4 case may be found in \cite[\href{https://cocalc.com/klockrid/main/SL2F3/files/SL(2,3)-Char-4.ipynb}{SL(2,3)-Char-4.ipynb}]{ourcode}. In the argument below, there are two places where we have a list of cases to consider, and our code is used to verify our claims about these cases by brute force.

\begin{prop}
    The group $\SL_2(\F_3)$ is not the group of units in a finite ring.
\end{prop}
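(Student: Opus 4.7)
The plan is to eliminate the two remaining characteristics, $c=2$ and $c=4$. Let $R$ be a finite ring of one of these characteristics realizing $G=\SL_2(\F_3)$, and write $J = J(R)$.

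First, since $R$ has $2$-power characteristic, $|R|$ and $|J|$ are powers of $2$, and $1+J$ is a normal subgroup of $G$, so $|J|\in\{1,2,8\}$; when $c=4$, also $2\in J$. The case $|J|=1$ is handled by Artin--Wedderburn: $R$ is a product of rings $\mathbf{M}_n(\F_{2^r})$, and a direct inspection shows that no such product has unit group of order $24$. The case $|J|=2$ requires $|(R/J)^\times|=12$, but $12$ cannot be obtained as a product of orders of the form $|\GL_n(\F_{2^r})|$ (the possible factors are $1,3,6,7,15,168,180,\ldots$, none of whose products hit $12$). These observations force $|J|=8$ and $|(R/J)^\times|=3$, so that $R/J \cong \F_4 \times \F_2^\ell$ for some $\ell\geq 0$.

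Since $R$ is semiperfect, the block decomposition of $R/J$ lifts, giving $R \cong R_0 \times \F_2^\ell$ where $R_0$ is a local ring with residue field $\F_4$. The factor $\F_2^\ell$ has trivial unit group, so $R_0^\times \cong \SL_2(\F_3)$. Then $|R_0|=32$ and $1+\mathfrak{m}_0$ is the unique normal $2$-subgroup of order $8$ in $\SL_2(\F_3)$, namely $\Q_8$. Thus the problem reduces to showing that no local ring $R_0$ of characteristic $c\in\{2,4\}$ with $|R_0|=32$, residue field $\F_4$, and $1+\mathfrak{m}_0\cong \Q_8$ has unit group $\SL_2(\F_3)$.

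This last step is the principal obstacle: it is a finite check but appears difficult to execute by hand, because the candidates are parameterized by the $\F_4$-bimodule structure on $\mathfrak{m}_0$, the multiplication rule making $\mathfrak{m}_0$ into a nilpotent ideal (with $\mathfrak{m}_0^n=0$ for a bounded $n$), and, in characteristic $4$, by a choice of lift of $2\in\mathfrak{m}_0$. I would carry it out as a brute-force enumeration in GAP, computing the unit group of each candidate local ring and verifying that none of them is isomorphic to $\SL_2(\F_3)$; this is precisely what the referenced SageMath/GAP notebooks accomplish in the two cases $c=2$ and $c=4$.
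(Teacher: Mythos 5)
Your reduction works cleanly up to the point where you pin down $|J|=8$ and $R/J\cong \F_4\times \F_2^{\ell}$ (and your eliminations of $|J|\in\{1,2\}$ via Artin--Wedderburn and unit-group orders are fine). The genuine gap is the next sentence: ``since $R$ is semiperfect, the block decomposition of $R/J$ lifts, giving $R\cong R_0\times \F_2^{\ell}$.'' Idempotents lift modulo the nilpotent ideal $J$, but \emph{central} idempotents of $R/J$ need not lift to central idempotents of $R$, so the Wedderburn decomposition of $R/J$ does not in general induce a ring decomposition of $R$. The upper triangular ring $\upt_2(\F_2)$ (with $R/J\cong\F_2\times\F_2$) is the standard counterexample, and triangular-type rings such as $\begin{bmatrix}\F_4 & V\\ 0 & \F_2\end{bmatrix}$ with $V$ a bimodule inside $J$ show that in your situation the $\F_4$ block can be glued nontrivially to the $\F_2$ blocks. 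Consequently your final finite check --- enumerating local rings $R_0$ of order $32$ with residue field $\F_4$ and $1+\mathfrak{m}_0\cong\Q_8$ --- does not exhaust the candidates: there remain non-split rings of order $2^{5+\ell}$ (for every $\ell\geq 0$) whose Peirce decomposition has nonzero off-diagonal components inside $J$, and nothing in your argument bounds $\ell$ or rules these out. Either you must prove the splitting in this specific situation (using, say, the constraint $1+J\cong\Q_8$ and $R^\times\cong\SL_2(\F_3)$, or a reduction to the subring generated by the units), or the enumeration must be set up so as to cover the non-split cases.

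A second, lesser issue: the computation you defer to is not ``precisely what the referenced notebooks accomplish.'' The paper's route is different: after assuming (without loss of generality) that $R$ is generated by its units, it writes $R=\Z_t[G]/I$ with $G\hookrightarrow \Z_t[G]/I$, derives by hand a short list of explicit elements that must lie in $I$ (images of specific units of order dividing $8$ or of order $3$, together with relations like $1+i^2\in I$ and $2r\equiv 2$ in characteristic $4$), and then uses GAP to verify that every resulting quotient kills part of $G$. That argument needs no structure theory of $R$ beyond the characteristic restriction, whereas your proposed enumeration of local rings is a separate computation that has not been carried out and, as noted above, would not by itself suffice without the splitting step.
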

\begin{proof}

    At this point, we know that if $R$ is a finite ring with group of units $G = \SL_2(\F_3)$, then it must have characteristic $t = 2$ or $t = 4.$ Without loss of generality, we may assume that $R$ is generated by its units. The inclusion map $G \longrightarrow R$ extends to a surjective ring homomorphism \[ F\colon \Z_t[G] \longrightarrow R.\] Hence, $R$ has the form $\Z_t[G]/I$ for some two-sided ideal $I$, and the natural map \[G \longrightarrow \Z_t[G]/I\] is an embedding.
    
   Our group $G$, of order 24, is also known as the binary tetrahedral group. The quaternion group $\Q_8 = \{\pm 1, \pm i, \pm j, \pm k\}$ is a proper normal subgroup of $G$, and $G$ is generated by $\Q_8$ and an element $c \in G\setminus \Q_8$ of order 3. (Note that in the group ring $\Z_t[G]$, the element $-1 \in \Q_8$ of order 2 is not the same as the additive inverse of the multiplicative identity $1 \in R$.)

    First, we consider the case $t = 2$. We used SageMath/GAP to verify that $A = 1 + cj + (-c)$ and $B = j + c + ci$ are units in $\Z_2[G]$ of order dividing 8. Hence, there are elements $x, y \in \Q_8$ such that $A + x$ and $B + y$ are elements of $I$. However, again using SageMath/GAP, we checked that in all cases the natural map $G \longrightarrow \Z_2[G]/(A + x, B + y)$ has a nontrivial kernel, contradicting the fact that the natural map $G \longrightarrow \Z_2[G]/I$ is an embedding.

    For the case $t = 4$, we proceed similarly. Suppose $R = \Z_4[G]/I$ realizes $G$ and the natural map $G \longrightarrow \Z_4[G]/I$ is an embedding. Since $i^2$ is the unique element of order 2 in $G = R^\times$, we must have $1 + i^2 = 0$ in $R$. Thus, $1 + i^2 \in I$. Since $(i + j + k)^2 = 1,$ we have that $i + j + k \pm 1 \in I.$ For any $r \in G$, $(1 + 2r)^2 = 1$, so either $2r = 0$ or $2r = 2$ mod $I$. But since $r$ is a unit and $2 \neq 0$ in $R,$ we must have $2r = 2$ mod $I$ for all $r \in G$. Now, let $x = 1 + c + i.$ Then, $$\begin{aligned} x^3 &= 2 - c - i + 1 - c^2 -ci-cj-2c-i+c^2i+c^2j+c^2k-ck\\ &= 3 - 2i - 3c -c(i+j+k)+c^2(i+j+k-1) \\ &= 1+ (c^2-c)(i+j+k-1).\end{aligned}$$ If $i + j + k + 1 \in I,$ we obtain $x^3 = 1 + 2(c^2 - c) = 1 +2 - 2 = 1,$ so $x^3 = 1$ and $x$ has order 1 or 3. If $i + j + k -1 \in I,$ then $x$ again has order 1 or 3. So, $I$ contains the ideal $J = (1 + i^2, 1 + j + k + \epsilon, x + v)$ where $\epsilon \in \{1, -1\}$ and $v \in G$ has order $1$ or $3$. We checked all cases using SageMath/GAP, and in all cases the natural map $G \longrightarrow \Z_4[G]/J$ is not an embedding.
\end{proof}

\subsection{Rings of characteristic zero} \label{sl2char0}

We are unable to completely determine which special linear groups $\SL_2(\F_q)$ are the group of units in a ring of characteristic zero, though we have interesting partial results. It is instructive to look at the first few cases. The group $\SL_2(\F_q)$ is not realizable in characteristic zero when $q$ is even by Corollary \ref{slcor} above. The group $\SL_2(\F_3)$ is the group of units in the Hurwitz Integers. We do not know whether the binary icosahedral group $\SL_2(\F_5)$ is the group of units in a ring of characteristic zero. The obvious ring to try, by analogy with the Hurwitz Integers, is the subring of the real quaternions generated by $\SL_2(\F_5)$, but this ring has units of infinite order.

Fortunately, an infinite family of possibilities may be eliminated. The idea is as follows. If $G = \SL_2(\F_q)$ (where $q$ is odd) is realizable in characteristic zero, then so are its maximal abelian subgroups (see \cite[2.2]{CL19}). It is known which cyclic groups are realizable in characteristic zero, so if $G$ has a maximal abelian subgroup that is cyclic and not realizable in characteristic zero, then $G$ is not realizable in characteristic 0. Consider, for example, $\SL_2(\F_7)$. It contains $\C_8$ as a maximal abelian subgroup. Since $\C_8$ is not realizable in characteristic zero, neither is $\SL_2(\F_7).$

For any field $F$, let \[ \UC(F) = \left\{ \mm{x}{y}{-y}{x} \Big | \, x, y \in F\right\} \subseteq \SL_2(F). \] The set $\UC(F)$ is an abelian subgroup of $\SL_2(F)$ that we identify with the unit circle $$\{ (x, y) \in F^2 \, | \, x^2 + y^2 = 1\}$$ in $F^2$ for notational convenience. This group turns out to be cyclic and a maximal abelian subgroup of $\SL_2(\F_q).$

\begin{prop} Let $q = p^k$ be a positive odd prime power.
\begin{enumerate}
    \item If $q \equiv 1\, (4),$ then $\UC(\F_q)$ is cyclic of order $q - 1$. \label{q1}
    \item If $q \equiv 3\, (4)$, then $\UC(\F_q)$ is cyclic of order $q + 1$. \label{q3}
    \item $\UC(\F_q)$ is a maximal abelian subgroup of $\SL_2(\F_q).$ \label{maxab}
\end{enumerate}
\end{prop}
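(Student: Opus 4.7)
The plan is to realize $\UC(\F_q)$ as a multiplicative subgroup of a finite field via the usual matrix-to-complex-number correspondence. Let $K$ be the smallest extension of $\F_q$ containing an element $i$ with $i^2 = -1$, so $K = \F_q$ when $q \equiv 1\,(4)$ and $K = \F_{q^2}$ when $q \equiv 3\,(4)$. I would define $\map{\phi}{\UC(\F_q)}{K^\times}$ by $\phi\left(\mm{x}{y}{-y}{x}\right) = x + iy$. A direct expansion of matrix multiplication (mirroring the identity $(x+iy)(x'+iy') = (xx'-yy') + i(xy'+yx')$) shows that $\phi$ is a group homomorphism.

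For part (\ref{q1}), with $q \equiv 1\,(4)$ and $K = \F_q$, I would show $\phi$ is an isomorphism onto $\F_q^\times$. If $\phi(M) = 1$, then $x = 1 - iy$, and substituting into $x^2 + y^2 = 1$ forces $y = 0$ and $x = 1$, so $\phi$ is injective. For surjectivity, given $u \in \F_q^\times$, setting $x = (u+u^{-1})/2$ and $y = (u-u^{-1})/(2i)$ produces a matrix in $\UC(\F_q)$ that maps to $u$ under $\phi$. Hence $\UC(\F_q) \cong \F_q^\times$ is cyclic of order $q-1$. For part (\ref{q3}), with $q \equiv 3\,(4)$ and $K = \F_{q^2}$, the set $\{1,i\}$ is an $\F_q$-basis of $\F_{q^2}$, so $\phi$ is automatically injective. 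Since $q \equiv 3\,(4)$ gives $i^q = -i$, for $x,y \in \F_q$ we have $(x+iy)^q = x - iy$, and therefore $x^2 + y^2 = (x+iy)(x+iy)^q$ is precisely the field norm of $x+iy$ under $\map{N}{\F_{q^2}^\times}{\F_q^\times}$, $N(\alpha) = \alpha^{q+1}$. Thus $\phi$ identifies $\UC(\F_q)$ with the kernel of $N$, which is the subgroup of $(q+1)$-th roots of unity in $\F_{q^2}^\times$; since $q+1$ divides $q^2 - 1$, this kernel is cyclic of order exactly $q + 1$.

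For part (\ref{maxab}), I would take the specific element $J = \mm{0}{1}{-1}{0} \in \UC(\F_q)$ and compute its centralizer in $\mathbf{M}_2(\F_q)$. Solving $AJ = JA$ for a generic $A = \mm{a}{b}{c}{d}$ yields $a = d$ and $b = -c$, so the centralizer consists of matrices of the form $\mm{a}{b}{-b}{a}$; intersecting with $\SL_2(\F_q)$ recovers $\UC(\F_q)$ exactly. Any abelian subgroup of $\SL_2(\F_q)$ containing $\UC(\F_q)$ must in particular centralize $J$ and is therefore contained in $\UC(\F_q)$, proving maximality. I anticipate no serious obstacle; the most delicate step is observing that in the $q \equiv 3\,(4)$ case the defining equation $x^2 + y^2 = 1$ coincides with the field-norm equation, and this rests on the single identity $i^q = -i$.
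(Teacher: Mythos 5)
Your proposal is correct and follows essentially the same route as the paper: the map $(x,y)\mapsto x+iy$ into $\F_q^\times$ or $\F_{q^2}^\times$ (with the same explicit inverse in the split case and the same norm identity $(x+iy)^{q+1}=x^2+y^2$ in the non-split case), and the same centralizer computation with $\mm{0}{1}{-1}{0}$ for maximality. No gaps; the extra detail you give on injectivity and on identifying the image with the kernel of the norm matches what the paper leaves implicit.
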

\begin{proof}

    For (\ref{q1}), suppose $q \equiv 1 \, (4)$. Then $i = \sqrt{-1} \in \F_q$ and we have $1 = x^2 + y^2 = (x + yi)(x - yi)$. The map \[F \colon \UC(\F_q) \longrightarrow \F_q^\times = \C_{q-1}\] defined by $F(x, y) = x + iy$ is an isomorphism with inverse $F^{-1}(s) = ((s + s^{-1})/2, (s - s^{-1})/(2i).$

    For (\ref{q3}), suppose $q \equiv 3\, (4)$, then $i \not \in \F_q$. Consider the map \[F \colon \UC(\F_q) \longrightarrow \F_q[i] = \F_{q^2}\] defined by $F(x, y) = x + iy.$ This map is injective since $i \not \in \F_q$, and the image of $F$ is exactly $\C_{q+1} \leq \F_{q^2}^\times$ because \[1 = x^2 + y^2 = (x + iy)(x - iy) = (x + iy)(x + iy)^q = (x + iy)^{q + 1}.\]   

    To prove (\ref{maxab}), we will show that $\UC(\F_q)$ is self-centralizing in $\SL_2(\F_q)$. (A subgroup $H$ of a group $G$ is {\bf self-centralizing} if $H$ contains its centralizer in $G$.) Take \[ A = \mm{x}{y}{z}{w} \in \SL_2(\F_q)\] in the centralizer of $\UC(\F_q)$ and take \[\mm{0}{1}{-1}{0} \in \UC(\F_q).\] Then,
    \[
    \begin{aligned}
        \mm{x}{y}{z}{w} \mm{0}{1}{-1}{0}&= \mm{0}{1}{-1}{0}\mm{x}{y}{z}{w} \\
        \mm{-y}{x}{-w}{z} &= \mm{z}{w}{-x}{-y}
    \end{aligned}
    \] from which it follows that $z = -y$ and $x = w$, so $A \in \UC(\F_q).$ Hence, $\UC(\F_q)$ is a maximal abelian subgroup of $\SL_2(\F_q)$.
\end{proof}

\begin{prop}
    Let $q = p^k$ be a positive odd prime power. If $q \equiv \pm 1\, (8)$, then $\SL_2(\F_q)$ is not realizable in characteristic zero.
\end{prop}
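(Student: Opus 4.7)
The plan is to reduce this to the classification of finite cyclic groups realizable in characteristic zero via the maximal-abelian-subgroup technique already used in the paper for $\SL_2(\F_7)$. If $\SL_2(\F_q)$ is realizable in characteristic zero, then by \cite[2.2]{CL19} every maximal abelian subgroup is realizable in characteristic zero as well. I would apply this to $\UC(\F_q)$, which the preceding proposition identifies as a maximal abelian subgroup that is cyclic.

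The key calculation is to check that the hypothesis $q \equiv \pm 1 \, (8)$ forces $|\UC(\F_q)|$ to be a multiple of $8$. First, if $q \equiv 1 \, (8)$, then in particular $q \equiv 1 \, (4)$, so by part (\ref{q1}) of the previous proposition $\UC(\F_q)$ is cyclic of order $q - 1$, which is divisible by $8$. Second, if $q \equiv -1 \, (8)$, then $q \equiv 3 \, (4)$, so by part (\ref{q3}) the group $\UC(\F_q)$ is cyclic of order $q + 1$, again divisible by $8$. In particular $|\UC(\F_q)| \geq 8$ in either case.

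The conclusion then follows from the known classification of realizable cyclic groups (Gilmer \cite{Gil63} and Pearson--Schafer \cite{PS70}): the only finite cyclic groups that arise as the unit group of a ring of characteristic zero are $\cyc_1$, $\cyc_2$, $\cyc_4$, and $\cyc_6$ (realized by $\{0\}$, $\Z$, $\Z[i]$, and the Eisenstein integers $\Z[\omega]$ respectively; for any larger $n$, Dirichlet's unit theorem produces units of infinite order in $\Z[\zeta_n]$, the subring generated by a hypothetical cyclic unit group). Since $|\UC(\F_q)| \geq 8$, this group cannot be realizable in characteristic zero, and neither can $\SL_2(\F_q)$.

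There is no significant obstacle here: all the heavy lifting (the structure of $\UC(\F_q)$, its maximality among abelian subgroups, the subgroup-realizability lemma, and the characteristic-zero cyclic classification) is already in place from the preceding propositions and the cited literature. The proof amounts to combining these ingredients and doing the easy mod-$8$ bookkeeping above.
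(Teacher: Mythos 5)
Your proposal follows essentially the same route as the paper: pass to the maximal abelian subgroup $\UC(\F_q)$, note that $q \equiv \pm 1\,(8)$ makes it cyclic of order $q-1$ or $q+1$, hence of order divisible by $8$, and invoke the classification of cyclic unit groups. One caveat: the parenthetical ``complete list'' you give for characteristic zero --- that the only realizable finite cyclic groups are $\cyc_1$, $\cyc_2$, $\cyc_4$, $\cyc_6$ --- is false; for instance, the paper itself observes in \S\ref{aglchar0} that $\Z_{2m}$ is realizable in characteristic zero for every odd $m$ (so $\cyc_{10}$, $\cyc_{14}$, etc.\ do occur), and your Dirichlet-unit-theorem sketch does not justify the list either, since the subring generated by a hypothetical cyclic unit group is a quotient of $\Z[x]/(x^n-1)$ and need not be the domain $\Z[\zeta_n]$. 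This does not break the argument, because all you actually need --- and all the paper uses --- is that a cyclic group whose order is divisible by $8$ is not realizable in characteristic zero, which is exactly what \cite{PS70} provides and what your mod-$8$ computation sets up; just replace the ``$|\UC(\F_q)| \geq 8$'' phrasing and the purported classification with that statement.
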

\begin{proof}
    Assume to the contrary that there is a ring $R$ of characteristic zero whose group of units is isomorphic to $\SL_2(\F_q).$ Then, the maximal abelian subgroup $\UC(\F_q)$ is also realizable in characteristic zero. If $q \equiv 1\, (8)$, then $q \equiv 1\, (4)$ and $\UC(\F_q) = \C_{q-1}$ with $8 \mid q - 1.$ If $q \equiv -1\, (8),$ then $q \equiv 3\, (4)$ and $\UC(\F_q) = \C_{q+1}$ with $8 \mid q + 1$. In both cases, we may conclude that a cyclic group whose order is divisible by 8 is realizable in characteristic zero; however, this is impossible by \cite{PS70}.
\end{proof}

\section{Affine linear groups} \label{sec:AGL}

In this section, we consider the affine general linear group of degree 1 over the ring $R = \Z_n$: \[ \AGL_1(\Z_n) = \Z_n \rtimes \Z_n^\times = \Hol(\Z_n).\] This is the group of affine linear transformations $x \mapsto ax + b$ where $a$ is a unit modulo $n$ and $b$ is an integer modulo $n$. In this special case, the affine general linear group is isomorphic to the holomorph of $\Z_n$. 

We begin with a few general observations. Note that we may identify $\Z_n$ with the normal subgroup $\Z_n \times \{1\}$ in $\Hol(\Z_n) = \Z_n \rtimes \Z_n^\times$.

\begin{prop}
    Let $n > 1$ be a positive integer.
    \begin{enumerate}
        \item For $n$ with prime factorization $n = p_1^{k_1} \cdots p_m^{k_m},$ we have that \[ \Hol(\Z_n) = \prod_{i =1}^m \Hol(\Z_{p_i^{k_i}})\] is the unique factorization of $\Hol(\Z_n)$ as a product of indecomposable groups. Consequently, every direct summand of $\Hol(\Z_n)$ has the form $\Hol(\Z_m)$ for some $m \mid n$. \label{holdecomp}
        \item The center of $\Hol(\Z_n)$ is trivial if $n$ is odd and cyclic of order $2$ otherwise. \label{holcenter}
        \item If $R$ is a ring with group of units $\Hol(\Z_n)$, the the characteristic of $R$ is $0, 2, 3, 4,$ or $6$; if $n$ is odd, then $R$ has characteristic $2.$\label{holchar}
        \item $\Z_n$ is a maximal abelian subgroup of $\Hol(\Z_n)$. \label{holmaxab}
        \item Every non-trivial normal subgroup of $\Hol(\Z_n)$ intersects $\Z_n$ non-trivially. Hence, if $\Hol(\Z_n)$ has a normal $p$-subgroup, then $p \mid n$. \label{holnormint}
    \end{enumerate} \label{holfacts}
\end{prop}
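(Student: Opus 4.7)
The unifying tool for all five parts is the multiplication law in $\Hol(\Z_n) = \Z_n \rtimes \Z_n^\times$, namely $(b,a)(b',a') = (b + ab', aa')$, together with the conjugation formula
\[
(b,a)(b',a')(b,a)^{-1} = \bigl(b(1-a') + ab',\, a'\bigr)
\]
that it implies. My plan is to prove the parts in the order (B), (C), (D), (E), (A), so that the centrality and conjugation calculations are done first and then fed into the decomposition statement.

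For (B), centrality of $(b,a)$ is equivalent to $b(1-a') = (1-a)b'$ for all $(b',a')$; plugging in $(b',a') = (1,1)$ forces $a = 1$, leaving $b(1-a') \equiv 0 \pmod n$ for every unit $a'$. Since $-1 \in \Z_n^\times$ this yields $2b \equiv 0$, so $b = 0$ when $n$ is odd (trivial center) and $b \in \{0, n/2\}$ when $n$ is even; the element $(n/2, 1)$ is genuinely central because every unit modulo an even $n$ is odd, making $1-a'$ even. Part (C) is then essentially immediate: the characteristic subring $\Z_c$ is central in $R$, so $\Z_c^\times$ embeds into the center of $\Hol(\Z_n)$, which has order $1$ or $2$ according to the parity of $n$; the stated values of $c$ are precisely those with $|\Z_c^\times| \le 2$, with only $c = 2$ surviving in the odd case.

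For (D), the centrality condition applied to $(b,1)$ with $b = 1$ forces the second coordinate of any centralizing element to equal $1$, so $\Z_n$ equals its own centralizer and is therefore maximal abelian. For (E), I take nontrivial $(b,a) \in N$; if $a = 1$ then $(b,a) \in N \cap \Z_n$ and we are done, while if $a \ne 1$ a short calculation shows that conjugating $(b,a)$ by $(b',1)$ and then multiplying by $(b,a)^{-1}$ produces $(b'(1-a), 1) \in N$ for every $b' \in \Z_n$; since $1 - a \not\equiv 0 \pmod n$, a suitable choice of $b'$ makes this element nontrivial, yielding the required intersection. The $p$-group consequence is then immediate, since $N \cap \Z_n$ becomes a nontrivial $p$-subgroup of the cyclic group $\Z_n$, forcing $p \mid n$.

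Finally (A), where I expect the only real obstacle. The Chinese Remainder Theorem identifies $\Z_n \cong \prod \Z_{p_i^{k_i}}$ as rings, so $\Z_n^\times \cong \prod \Z_{p_i^{k_i}}^\times$ acts componentwise and the semidirect products decompose compatibly. The nontrivial step is showing each $\Hol(\Z_{p^k})$ is indecomposable, but (E) disposes of this cleanly: in any decomposition $\Hol(\Z_{p^k}) = A \times B$ with $A, B$ nontrivial, both are normal, so (E) forces each to meet $\Z_{p^k}$ in a nontrivial subgroup; yet any two nontrivial subgroups of the cyclic $p$-group $\Z_{p^k}$ share a nontrivial subgroup, contradicting $A \cap B = \{e\}$. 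An appeal to the Krull--Schmidt theorem then secures uniqueness of the factorization and shows every direct summand of $\Hol(\Z_n)$ is the product of some subcollection of the $\Hol(\Z_{p_i^{k_i}})$, hence has the form $\Hol(\Z_m)$ for a divisor $m$ of $n$.
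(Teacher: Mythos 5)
Your argument is correct, and parts (\ref{holcenter})--(\ref{holnormint}) run along essentially the same lines as the paper: the same conjugation/commutator computation forces the second coordinate to be $1$ and $2b\equiv 0$, giving the center, the self-centralizing property of $\Z_n$, and the nontrivial intersection of normal subgroups with $\Z_n$; part (\ref{holchar}) is the same central-subring observation (just note explicitly that $c\neq 1$, since $\Hol(\Z_n)$ is nontrivial for $n>1$, so the zero ring is excluded). Where you genuinely diverge is part (\ref{holdecomp}). The paper obtains the coprime splitting by citing Miller and, crucially, gets indecomposability of each $\Hol(\Z_{p^k})$ by quoting Mills' theorem that the holomorph of an indecomposable, non-complete group is indecomposable, before invoking Krull--Schmidt. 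You instead derive the splitting directly from the Chinese Remainder Theorem and prove indecomposability internally: in a decomposition $\Hol(\Z_{p^k})=A\times B$ with both factors nontrivial, part (\ref{holnormint}) makes $A\cap\Z_{p^k}$ and $B\cap\Z_{p^k}$ nontrivial subgroups of a cyclic $p$-group, so both contain its unique subgroup of order $p$, contradicting $A\cap B=\{e\}$. This is a nice self-contained replacement for the appeal to Mills: it uses only facts you have already established and also explains \emph{why} the holomorph of a cyclic $p$-group cannot split, whereas the paper's route is shorter on the page and yields the more general statement about holomorphs of arbitrary indecomposable non-complete groups. Your use of Krull--Schmidt for uniqueness and for the claim that every direct summand is (isomorphic to) a subproduct, hence of the form $\Hol(\Z_m)$ with $m\mid n$, matches the paper.
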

\begin{proof}
    First consider item (\ref{holdecomp}). As observed in \cite{miller}, if $\gcd(a, b) = 1$, then $\Hol(\Z_{ab}) = \Hol(\Z_a) \times \Hol(\Z_b).$ This gives the decomposition. By \cite[Theorem 1]{mills}, since the groups $\Z_{p_i^{k_i}}$ are indecomposable and not complete, their holomorphs are indecomposable. The claim, including the last assertion, now follows from the Krull-Schmidt Theorem.

    For item (\ref{holcenter}), suppose $(a, b)$ lies in the center of $\Z_n \rtimes \Z_n^\times$. Then, the following two commutators must be trivial:
    \[
    \begin{aligned}
        (1,1)(a,b)(1,1)^{-1}(a,b)^{-1} &= (1 + a, b)(-1, 1)(-a/b, 1/b) \\
        &= (1+a-b, b)(-a/b, 1/b) \\
        &= (1-b, 1) = (0, 1)\\
        (0,-1)(a,b)(0,-1)^{-1}(a,b)^{-1} &= (-a, -b)(0, -1)(-a/b, 1/b) \\
        &= (-a, b)(-a/b, 1/b) \\
        &= (-2a, 1) = (0, 1).
    \end{aligned}
    \] These relations force $b = 1$ and $2a = 0$. The center is therefore trivial if $n$ is odd and cyclic of order 2, generated by $(n/2, 1)$, if $n$ is even.

    Item (\ref{holchar}) follows from the previous item (cf. the proof of Proposition \ref{sl2char}).

    To prove (\ref{holmaxab}), we prove that $\Z_n$ is self-centralizing in $\Hol(\Z_n)$. Take $(a, b)$ in the centralizer of $\Z_n$. We already proved above that when $(a, b)$ commutes with $(1, 1)$, $b = 1$ is forced. Thus, $(a, b) = (a, 1) \in \Z_n$.

    Finally, we prove (\ref{holnormint}). Let $(a,b)$ be a non-trivial element of a normal subgroup $N$ of $\Hol(\Z_n).$ We obtain another element of $N$ if we take the commutator of $(1, 1)$ and $(a,b)$, as above, so $(1-b, 1) \in N$. This element is a nontrivial element of $N \cap \Z_n$ unless $b  = 1$, in which case $(a, 1)$ is a nontrivial element of $N \cap \Z_n$.
\end{proof}

\subsection{Finite rings} In this subsection, we prove Theorem \ref{aglmain}.

\begin{prop}
    If $\Hol(\Z_n)$ or $\C_2 \times \Hol(\Z_n)$ is the group of units in a finite ring with trivial Jacobson radical, then $n \mid 6$. \label{holJ0}
\end{prop}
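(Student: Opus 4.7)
The plan is to apply the Artin--Wedderburn theorem and then match indecomposable factors via Krull--Schmidt. Because $R$ is a finite ring with $J(R) = 0$, Wedderburn gives $R \cong \prod_i \mathbf{M}_{n_i}(\F_{q_i})$, so $G = R^\times = \prod_i \GL_{n_i}(\F_{q_i})$. The characteristic of $R$ is then the product of the distinct residue characteristics, in particular squarefree, so by Proposition~\ref{holfacts}(\ref{holchar}) it lies in $\{2,3,6\}$.

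Two structural constraints cut the Wedderburn factors down to a short list. First, $Z(G) = \prod_i \C_{q_i-1}$, and in every case $G = \Hol(\Z_n)$ or $G = \C_2 \times \Hol(\Z_n)$ the center has exponent at most $2$ by Proposition~\ref{holfacts}(\ref{holcenter}); hence each $q_i \in \{2,3\}$. Second, $\Hol(\Z_n)$ is solvable (each factor $\Hol(\Z_{p^k})$ is metabelian), so $G$ is solvable and therefore so is every $\GL_{n_i}(\F_{q_i})$. For $q_i \in \{2,3\}$ this forces $n_i \leq 2$, since $\PSL_n(\F_{q_i})$ is nonabelian simple once $n_i \geq 3$. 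The only surviving nontrivial Wedderburn factors are $\GL_1(\F_3) = \C_2 = \Hol(\Z_2)$, $\GL_2(\F_2) = \Sym_3 = \Hol(\Z_3)$, and $\GL_2(\F_3)$.

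I would then invoke Krull--Schmidt. By Proposition~\ref{holfacts}(\ref{holdecomp}) the indecomposable decomposition of $\Hol(\Z_n)$ is $\prod_j \Hol(\Z_{p_j^{k_j}})$, with an extra factor $\C_2 = \Hol(\Z_2)$ adjoined in the $\C_2 \times \Hol(\Z_n)$ case. Both $\C_2$ and $\Sym_3$ are obviously indecomposable; to verify that $\GL_2(\F_3)$ is indecomposable, one uses that its commutator subgroup is $\SL_2(\F_3)$ (elementary transvections are commutators in $\GL_2(\F_3)$), so its abelianization is $\C_2$, and any nontrivial direct decomposition would exhibit a nontrivial perfect subgroup of order dividing $48$---impossible, since the smallest nontrivial perfect group is $\A_5$ of order $60$. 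Matching the two Krull--Schmidt decompositions forces every matrix-side factor to be isomorphic to some $\Hol(\Z_{p_j^{k_j}})$ (or to a stray $\C_2$). Since $|\Hol(\Z_{p^k})| = p^{2k-1}(p-1)$ never equals $48$, no $\GL_2(\F_3)$ factor can occur, and every $\Hol(\Z_{p_j^{k_j}})$ must match $\Hol(\Z_2)$ or $\Hol(\Z_3)$; thus each $p_j^{k_j} \in \{2,3\}$, and $n \mid 6$.

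The main technical hurdle I anticipate is the indecomposability of $\GL_2(\F_3)$, which the abelianization/perfect-group dichotomy dispatches cleanly; the remaining steps are routine Artin--Wedderburn and Krull--Schmidt bookkeeping plus the one order check ruling out $|\Hol(\Z_{p^k})| = 48$.
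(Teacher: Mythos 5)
Your proof is correct, but its key step differs from the paper's. Both arguments share the same skeleton: Artin--Wedderburn writes $R$ as a product of matrix rings, the exponent-$2$ center of $\C_2^\epsilon\times\Hol(\Z_n)$ forces each residue field to be $\F_2$ or $\F_3$, and Krull--Schmidt is used to compare indecomposable factors. Where the paper diverges is in how it constrains the factors $\Hol(\Z_{p^r})$: it notes that such a summand is a direct factor of some $\GL_m(\F_q)$ with $q\in\{2,3\}$, so that $\Z_{p^r}$ becomes a \emph{cyclic normal subgroup} of $\GL_m(\F_q)$, and then invokes the classification of normal subgroups of $\GL_m(\F_q)$ (central or containing $\SL_m(\F_q)$ for $m>2$, plus the explicit lists for $\GL_2(\F_2)$ and $\GL_2(\F_3)$ from Proposition \ref{slfacts}) to force $p^r\in\{2,3\}$; it never needs to decide whether $\GL_2(\F_3)$ is indecomposable. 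You instead bound the matrix degree by solvability (a softer fact than the normal-subgroup classification, since $\PSL_m$ is nonabelian simple for $m\geq 3$), reduce the possible unit factors to $\C_2$, $\Sym_3$, $\GL_2(\F_3)$, prove indecomposability of $\GL_2(\F_3)$ via the abelianization/perfect-group bound (which is correct: a nontrivial perfect direct factor would have order dividing $48<60$), and finish by matching Krull--Schmidt factors with the order computation $|\Hol(\Z_{p^k})|=p^{2k-1}(p-1)\neq 48$ and the identifications $\Hol(\Z_2)=\C_2$, $\Hol(\Z_3)=\Sym_3$. The trade-off: the paper's route avoids your indecomposability check and order arithmetic but leans on normal-subgroup structure of $\GL_m$; yours replaces that structural input with solvability plus bookkeeping. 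One small caveat: your opening aside that the characteristic lies in $\{2,3,6\}$ by Proposition \ref{holfacts}(\ref{holchar}) is not quite justified in the $\C_2\times\Hol(\Z_n)$ case (that item is stated for $\Hol(\Z_n)$ alone), but nothing in your argument uses it, so the proof stands.
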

\begin{proof}
    Suppose $R$ is a finite ring with trivial Jacobson radical and unit group $\C_2^\epsilon \times \Hol(\Z_n)$, where $\epsilon \in \{0,1\}$. 
    
    Since $J(R) = 0$, $R$ is a finite product of matrix rings of the form $\mathbf{M}_m(\F_{q^k})$ by the Artin-Wedderburn Theorem. The center of $\C_2^\epsilon \times \Hol(\Z_n)$ is a subgroup of $\C_2 \times \Z_2$, and the center of $\GL_m(\F_{q^k})$ is cyclic of order ${q^k - 1},$ so we must have $q^k = 2$ or $q^k = 3$. Thus, $\C_2^\epsilon \times \Hol(\Z_n)$ is a direct product of groups of the form $\GL_m(\F_q)$, where $q = 2$ or $q = 3$.

    Consider a (nontrivial) summand $\Hol(\Z_{p^r})$ of $\Hol(\Z_n)$ in the decomposition given in Proposition \ref{holfacts} (\ref{holdecomp}). This indecomposable group must be a summand of $\GL_m(\F_q)$ for some $m \geq 1$ and $q \in \{2,3\}.$ In this situation, $\Z_{p^r}$ is a non-trivial normal subgroup of $\GL_m(\F_q).$

    For $m > 2$, the only normal subgroups of $\GL_m(\F_q)$ are the subgroups of the center and the subgroups containing $\SL_m(\F_q)$. But since $\SL_m(\F_q)$ is non-abelian, we have that $\Z_{p^r}$ is a subgroup of $\F_2^\times$ or $\F_3^\times$, forcing $p^r = 2$ and $q = 3$.
    
    For $m = 2$, the group $\GL_2(\F_3)$ has cyclic normal subgroups of size 1 and 2 only, and the group $\GL_2(\F_2) = \Sym_3$ has cyclic normal subgroups of size 1 and 3 only. In this case, $p^r = 2$ and $q = 3$ or $p^r = 3$ and $q = 2$ is forced.

    For $m = 1$, $\GL_1(\F_2)$ and $\GL_1(\F_3)$ are cyclic of order 1 and 2, respectively. In this case, $p^r = 2$ and $q = 3$.

    Taken together, the above cases imply that $p^r = 2$ or $p^r = 3$, so $n \mid 6$. 
\end{proof}

\begin{prop}
    If $\Hol(\Z_n)$ is realizable in characteristic $3,$ then $n = 2$ or $n = 6$.
\end{prop}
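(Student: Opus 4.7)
The strategy is a case analysis on the Jacobson radical $J = J(R)$ of a hypothetical ring $R$ of characteristic $3$ with $R^\times \cong \Hol(\Z_n)$. Since $\Z_3^\times = \C_2$ embeds in $Z(R^\times) = Z(\Hol(\Z_n))$, Proposition~\ref{holfacts}\textup{(\ref{holcenter})} forces $n$ to be even, and $|J|$ is necessarily a power of $3$. If $J = 0$, Proposition~\ref{holJ0} yields $n \mid 6$, so $n \in \{2, 6\}$. Otherwise $1 + J$ is a nontrivial normal $3$-subgroup of $\Hol(\Z_n)$, so Proposition~\ref{holfacts}\textup{(\ref{holnormint})} gives $3 \mid n$, hence $6 \mid n$. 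Write $n = 2^a \cdot 3^b \cdot m$ with $a, b \geq 1$ and $\gcd(m, 6) = 1$, yielding $\Hol(\Z_n) = \Hol(\Z_{2^a}) \times \Hol(\Z_{3^b}) \times \Hol(\Z_m)$ by Proposition~\ref{holfacts}\textup{(\ref{holdecomp})}. The goal is to force $a = b = 1$ and $m = 1$.

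To pin down $a = 1$ and $m = 1$ I would apply Artin--Wedderburn and Krull--Schmidt to $R/J$. Since neither $\Hol(\Z_{2^a})$ (a $2$-group) nor $\Hol(\Z_m)$ has a nontrivial normal $3$-subgroup---the latter using Proposition~\ref{holfacts}\textup{(\ref{holnormint})} together with $\gcd(m, 3) = 1$---the subgroup $1 + J$ is contained entirely in $\Hol(\Z_{3^b})$. Hence $(R/J)^\times \cong \Hol(\Z_{2^a}) \times \bigl(\Hol(\Z_{3^b})/(1 + J)\bigr) \times \Hol(\Z_m)$, and Artin--Wedderburn also writes $(R/J)^\times$ as a direct product of groups $\GL_s(\F_{3^t})$. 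By Krull--Schmidt, the indecomposable factors $\Hol(\Z_{2^a})$ and $\Hol(\Z_{p^k})$ (for primes $p \geq 5$ dividing $m$) must each appear as indecomposable direct factors in some $\GL_s(\F_{3^t})$. A case analysis on $\GL_s(\F_{3^t})$, using that its derived subgroup $\SL_s(\F_{3^t})$ is quasi-simple for $s \geq 2$ (with the sole exception $\SL_2(\F_3)$), shows that an indecomposable $2$-power-order direct factor of $\GL_s(\F_{3^t})$ must be cyclic; since $\Hol(\Z_{2^a})$ is nonabelian for $a \geq 2$, this forces $a = 1$. The same analysis rules out each $\Hol(\Z_{p^k})$ ($p \geq 5$), which is nonabelian and solvable: a nonabelian direct factor of $\GL_s(\F_{3^t})$ with $s \geq 2$ must contain $\SL_s(\F_{3^t})$, which is nonsolvable outside the exceptional case; and in that exceptional case $\SL_2(\F_3) \supseteq \Q_8$, but $\Q_8$ cannot embed in the abelian group $\Z_{p^k}^\times \leq \Hol(\Z_{p^k})$. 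Hence $m = 1$ and $n = 2 \cdot 3^b$.

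To finish, I would force $b = 1$ via a maximal-abelian-subgroup argument. Since $\Z_{3^b} \leq \Hol(\Z_{3^b})$, the unit group $R^\times = \C_2 \times \Hol(\Z_{3^b})$ contains an element of order $3^b$; write it as $1 + x$. The characteristic-$3$ Freshman's dream $(1 + x)^{3^i} = 1 + x^{3^i}$ gives $x^{3^b} = 0$ and $x^{3^{b-1}} \neq 0$, so the subring $\F_3[x] \cong \F_3[t]/(t^k)$ has $k \geq 3^{b-1} + 1$ and abelian unit group of order $2 \cdot 3^{k-1}$. An argument in the spirit of Proposition~\ref{holfacts}\textup{(\ref{holmaxab})} establishes that every abelian subgroup of $\Hol(\Z_{3^b})$ has order at most $3^b$: if $H \cap \Z_{3^b} = 3^i \Z_{3^b}$, then the image of $H$ in $\Z_{3^b}^\times$ must act trivially on $3^i \Z_{3^b}$, constraining $|H| \leq 3^b$. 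Thus every abelian subgroup of $R^\times$ has order at most $2 \cdot 3^b$, so $2 \cdot 3^{k-1} \leq 2 \cdot 3^b$ gives $k \leq b + 1$; combining with $k \geq 3^{b-1} + 1$ forces $3^{b-1} \leq b$, which fails for $b \geq 2$. Hence $b = 1$ and $n = 6$.

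The most delicate step is the last: one must combine the sharpened abelian-subgroup bound $|H| \leq 3^b$ in $\Hol(\Z_{3^b})$ with the Freshman's-dream analysis of $\F_3[x]$. The bound itself goes slightly beyond the self-centralizing property in Proposition~\ref{holfacts}\textup{(\ref{holmaxab})} but follows from a similar centralizer computation.
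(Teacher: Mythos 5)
Your argument is correct in outline, but it takes a genuinely different and much longer route than the paper, whose treatment of the $J \neq 0$ case is only a few lines: $1+J$ is a nontrivial normal $3$-subgroup, so $3 \mid n$ by Proposition \ref{holfacts} (\ref{holnormint}); since $\Z_n$ is a maximal abelian subgroup of $\Hol(\Z_n)$ (Proposition \ref{holfacts} (\ref{holmaxab})), the cyclic group $\Z_n$ must itself be realizable in characteristic $3$; and by \cite{PS70} a characteristic-$3$ ring with cyclic unit group is a product of fields $\F_{3^k}$ and copies of $\Z_3[x]/(x^2)$, so the only cyclic unit group of order divisible by $3$ is $\C_6$, forcing $n = 6$ at once. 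You instead re-derive this by hand: Artin--Wedderburn and Krull--Schmidt applied to $R/J$ to eliminate the indecomposable factors $\Hol(\Z_{2^a})$, $a \ge 2$, and $\Hol(\Z_{p^k})$, $p \ge 5$ (via the normal-subgroup structure of $\GL_s(\F_{3^t})$), and then a Freshman's-dream computation in the commutative subring $\Z_3[x]$ combined with the sharpened bound that abelian subgroups of $\Hol(\Z_{3^b})$ have order at most $3^b$ to force $b = 1$. That last step is, in effect, a hand-made special case of what \cite{PS70} supplies for free; your version buys independence from that classification (and from the paper's one-line use of maximal abelian subgroups) at the cost of extra group theory, while the paper's version handles the $2$-part, the $3$-part, and the prime-to-$6$ part simultaneously without ever decomposing $R/J$ in this case.

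Three loose ends, all fixable. First, you implicitly assume $R$ is finite: finiteness is needed for ``$|J|$ is a power of $3$,'' for Artin--Wedderburn over finite fields, and because Proposition \ref{holJ0} is stated for finite rings; as in the paper, one first passes to the subring generated by the units, which is finite since the group is finite and the characteristic is positive. Second, in the claim that a $2$-power-order indecomposable direct factor of $\GL_s(\F_{3^t})$ is cyclic, the exceptional case $\GL_2(\F_3)$ has the noncentral normal $2$-subgroup $\Q_8$; one should note that $\Q_8$ is not a direct factor (e.g.\ $\GL_2(\F_3)$ has elements of order $8$ while $\Q_8 \times \Sym_3$ does not), or simply that $\Q_8 \not\cong \Hol(\Z_{2^a})$ for any $a$, so your conclusion $a = 1$ stands. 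Third, the phrase ``$\Q_8$ cannot embed in the abelian group $\Z_{p^k}^\times$'' should be justified as: any $2$-subgroup of $\Hol(\Z_{p^k})$ maps injectively to $\Z_{p^k}^\times$ because $|\Z_{p^k}|$ is odd, hence is cyclic; alternatively, $|\GL_2(\F_3)| = 48$ is not divisible by any prime $p \ge 5$, so $\Hol(\Z_{p^k})$ cannot occur there at all.
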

\begin{proof}
    Suppose $\Hol(\Z_n)$ is realizable in characteristic 3. Then, there is a finite ring $R$ of characteristic 3 with unit group $\Hol(\Z_n)$.
    
    If $J(R) = 0$, then $n \mid 6$ by Proposition \ref{holJ0}. Since $R$ has characteristic 3, the center of $R^\times$ is nontrivial, so $R^\times$ cannot be $\Hol(\Z_1) = \C_1$ or $\Hol(\Z_3) = \Sym_3$. Thus, $n = 2$ or $n = 6$.

    If $J(R) \neq 0$, then $1 + J$ is a normal 3-subgroup of $\Hol(\Z_n)$, so $3 \mid n$ by Proposition \ref{holfacts} (\ref{holnormint}). Further, by Proposition \ref{holfacts} (\ref{holmaxab}), $\Z_n$ is realizable in characteristic 3. According to \cite[Theorem 1]{PS70}, a ring of characteristic 3 with cyclic group of units must be a product of copies of rings of the form $\F_{3^k}$ and/or $\Z_3[x]/(x^2)$. The only such ring whose unit group is cyclic of order divisible by 3 is $\Z_3[x]/(x^2)$, whose unit group is cyclic of order 6. So, if $J(R) \neq 0$, then $n = 6$.
\end{proof}

\begin{prop}
    Let $m > 0$ be odd. Every normal $2$-subgroup of \[\Hol(\Z_{4m}) = \Hol(\Z_4) \times \Hol(\Z_m) = \D_8 \times \Hol(\Z_m)\] has the form $N \times \{1\}$, where $N$ is a normal subgroup of $\D_8$. Similarly, every normal $2$-subgroup of $\Hol(\Z_{2m})$ is a subgroup of $\Hol(\Z_2)$. \label{holNSGs}
\end{prop}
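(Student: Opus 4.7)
The plan is to reduce both assertions to a single observation: since $m$ is odd, Proposition \ref{holfacts}(\ref{holnormint}) forces every normal $2$-subgroup of $\Hol(\Z_m)$ to be trivial (a normal $p$-subgroup of $\Hol(\Z_m)$ requires $p \mid m$, and $2 \nmid m$).

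Given this, the argument in each case is a short projection argument. First I would record the factorization $\Hol(\Z_{4m}) = \Hol(\Z_4) \times \Hol(\Z_m) = \D_8 \times \Hol(\Z_m)$, which follows from Proposition \ref{holfacts}(\ref{holdecomp}) since $\gcd(4,m) = 1$, together with the standard identification $\Hol(\Z_4) = \D_8$. Now let $P$ be a normal $2$-subgroup of $G := \D_8 \times \Hol(\Z_m)$ and let $\pi_2 \colon G \longrightarrow \Hol(\Z_m)$ be the projection onto the second factor. Then $\pi_2(P)$ is a normal subgroup of $\Hol(\Z_m)$ (the image of a normal subgroup of a direct product under projection to a factor is normal in that factor) and it is a $2$-group (being a quotient of the $2$-group $P$). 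The observation above forces $\pi_2(P) = \{1\}$, so $P \leq \ker \pi_2 = \D_8 \times \{1\}$. Hence $P = N \times \{1\}$ for some $N \leq \D_8$, and restricting normality of $P$ in $G$ to conjugation by $\D_8 \times \{1\}$ shows that $N \trianglelefteq \D_8$.

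The second assertion follows by the identical argument applied to the factorization $\Hol(\Z_{2m}) = \Hol(\Z_2) \times \Hol(\Z_m)$ (which again uses Proposition \ref{holfacts}(\ref{holdecomp}) since $\gcd(2,m)=1$): projecting a normal $2$-subgroup onto the $\Hol(\Z_m)$ factor yields the trivial group, so the normal $2$-subgroup lies in $\Hol(\Z_2) \times \{1\}$.

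The main obstacle is really only to make the two factorizations explicit and to note that projections of normal subgroups of a direct product are normal in the image factor; with those in hand, the whole proposition is immediate from Proposition \ref{holfacts}(\ref{holnormint}). I do not anticipate any case analysis or computation beyond this.
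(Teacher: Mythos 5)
Your proof is correct, and it takes a genuinely different route from the paper. The paper works directly inside $\Z_{4m} \rtimes \Z_{4m}^\times$: it takes $(a,b)$ in a normal $2$-subgroup, forms the commutator with $(1,1)$ to get $(1-b,1)$, and uses the fact that all elements involved have $2$-power order to force $b \equiv 1 \pmod{m}$ with $b^2 = 1$ and $a \equiv 0 \pmod{m}$, concluding that the subgroup lies in $\langle m \rangle \rtimes \langle 1 + 2m\rangle$, which is then identified as the $\D_8 = \Hol(\Z_4)$ summand. You instead quote the decomposition $\Hol(\Z_{4m}) = \D_8 \times \Hol(\Z_m)$ from Proposition \ref{holfacts}(\ref{holdecomp}) and run a projection argument: the image of a normal $2$-subgroup under projection to $\Hol(\Z_m)$ is a normal $2$-subgroup of $\Hol(\Z_m)$, hence trivial by Proposition \ref{holfacts}(\ref{holnormint}) since $m$ is odd, so the subgroup lies in $\D_8 \times \{1\}$, and normality of $N$ in $\D_8$ follows by restricting conjugation to that factor. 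Both \ref{holdecomp} and \ref{holnormint} are established before this proposition and independently of it, so there is no circularity; your argument is cleaner and avoids the explicit arithmetic modulo $4m$ (in effect, the paper's computation re-derives, in coordinates, exactly the triviality of the $\Hol(\Z_m)$-component that you obtain abstractly). What the paper's computation buys is an explicit identification of the $\D_8$ summand inside the semidirect product coordinates, but that is not needed for the later applications, which use only the direct-product form. One tiny point worth a clause in a write-up: Proposition \ref{holfacts} is stated for $n > 1$, so the case $m = 1$ should be dispatched separately (there $\Hol(\Z_m)$ is trivial and the claim is immediate); this is a cosmetic omission, not a gap.
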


\begin{proof}
    Let $H$ be a normal 2-subgroup of $G = \Z_{4m} \rtimes \Z_{4m}^\times$ with $m > 0$ odd. Take $(a, b) \in H$ and $(x, y) \in G.$ Then, as in the proof of Proposition \ref{holfacts} (\ref{holnormint}), \[ (1,1)(a,b)(-1,1)(a,b)^{-1} = (1-b, 1) \in H.\] Since the order of this element is a power of 2 and $m$ is odd, we must have $4(1-b) \equiv 0\, (4m),$ which implies $b \equiv 1\, (m).$ So $b = 1 + \epsilon m$ for some $\epsilon \in \{0,1,2,3\}$. Since $(a, b)$ has order a power of 2, we must also have, for some $k$, $b^{2^k} \equiv 1 \, (4m)$ and hence $b^{2^k} \equiv 1\, (2).$ This forces $\epsilon^{2^k}m^{2^k} \equiv 0\, (2),$ so $\epsilon$ is even because $m$ is odd. Thus, $b = 1$ or $b = 1 + 2m$ and $b^2 = 1$.

   Now $(a,b)(a,b) = (a(b+1), 1)$ has order a power of 2, so: \[
   \begin{aligned}
       4a(b+ 1) &\equiv 0\, (4m) \\
       a(b + 1) &\equiv 0\, (m) \\
       2a &\equiv 0\, (m) \\
       a &\equiv 0\, (m).
   \end{aligned}
    \] This proves that $H$ is a subgroup of $\langle m \rangle \rtimes \langle 1 + 2m \rangle$, but this subgroup is exactly the summand $\D_8 = \Hol(\Z_4)$ of $\Hol(\Z_n).$ This completes the proof of the first statement.

    The proof of the second statement is similar (and easier).
\end{proof}

\begin{prop}
    If $\Hol(\Z_n)$ is realizable in characteristic $2$ or $4$, then $n \mid 12$. However, $\Hol(\Z_1)$ and $\Hol(\Z_3) = \Sym_3$ are not realizable in characteristic $4$.
\end{prop}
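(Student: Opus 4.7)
My plan is to combine the normal $2$-subgroup structure of $\Hol(\Z_n)$ (Propositions \ref{holfacts} and \ref{holNSGs}) with the semisimple constraint of Proposition \ref{holJ0} and the non-realizability of cyclic $2$-groups of order at least $8$ in characteristic $2$ or $4$ from \cite{CL15}. Let $R$ be a finite ring of characteristic $c \in \{2,4\}$ realizing $G = \Hol(\Z_n)$, and let $J = J(R)$; then $1+J$ is a normal $2$-subgroup of $G$ and $(R/J)^\times = G/(1+J)$ is the unit group of a finite semisimple ring of characteristic $2$.

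First, I would bound the $2$-part of $n$. By Proposition \ref{holfacts}(\ref{holmaxab}), $\Z_n$ is a maximal abelian subgroup of $G$; by \cite[2.2]{CL19}, this forces $\C_n$ to be realizable in characteristic $c$, for instance by the commutative subring of $R$ generated by $\Z_n$. Decomposing this commutative ring into local factors with cyclic unit groups of pairwise coprime orders places the full $2$-primary component $\C_{2^a}$ of $\C_n$ inside the unit group of a single local factor $S_\ast$, and the subring $\Z_c + \mathfrak{m}_\ast$ of $S_\ast$ then has unit group exactly $\C_{2^a}$ in characteristic $2$ or $4$. By \cite{CL15}, $a \leq 2$; write $n = 2^a m$ with $m$ odd.

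Next I would case-analyze on $a$. For $a = 0$, Proposition \ref{holfacts}(\ref{holnormint}) forces $1+J$ to be trivial (since $\Z_n$ has no $2$-torsion), so $R$ is semisimple and Proposition \ref{holJ0} gives $n \mid 6$; as $n$ is odd, $n \in \{1,3\}$. For $a = 1$, Proposition \ref{holNSGs} shows $1+J \leq \Hol(\Z_2) \cong \C_2$, so $(R/J)^\times$ is either $\Hol(\Z_{2m})$ or $\Hol(\Z_m)$, and Proposition \ref{holJ0} forces $m \mid 3$, hence $n \in \{2,6\}$. For $a = 2$, Proposition \ref{holNSGs} gives $1+J = N \times \{1\}$ with $N$ a normal subgroup of $\D_8 = \Hol(\Z_4)$, so $(R/J)^\times = (\D_8/N) \times \Hol(\Z_m)$. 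Running through the possibilities for $N$ (trivial, central $\C_2$, one of the three order-$4$ subgroups, or all of $\D_8$) produces quotient $\Hol(\Z_{4m})$ (ruled out by Proposition \ref{holJ0}, since $4m \nmid 6$), $\C_2 \times \Hol(\Z_{2m})$, $\Hol(\Z_{2m})$, or $\Hol(\Z_m)$; in each surviving case Proposition \ref{holJ0} forces $m \mid 3$, so $n \in \{4,12\}$. Altogether $n \mid 12$.

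For the final assertion, Proposition \ref{holfacts}(\ref{holchar}) states that characteristic $4$ forces $n$ to be even, so $\Hol(\Z_1)$ and $\Hol(\Z_3)$ are not realizable in characteristic $4$. The main obstacle is the $2$-part bound in the first step: isolating a subring whose unit group is exactly $\C_{2^a}$ takes some care in the local decomposition, but once that is accomplished \cite{CL15} closes the bound and the remainder is a bookkeeping exercise on the short list of possible quotients afforded by Proposition \ref{holNSGs}.
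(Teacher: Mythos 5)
Your argument is correct and follows essentially the same route as the paper: use $1+J$ as a normal $2$-subgroup, pin it down with Proposition \ref{holNSGs}, apply Proposition \ref{holJ0} to the semisimple quotient $(R/J)^\times = \Hol(\Z_n)/(1+J)$, and exclude $8 \mid n$ via realizability of the maximal abelian subgroup $\Z_n$. Two small remarks: your careful local-ring maneuver to isolate a subring with unit group exactly $\C_{2^a}$ is sound but unnecessary, since \cite[Theorem 1]{PS70} (the source the paper uses) already says $\Z_{8k}$ is not realizable in characteristic $2$ or $4$ for any $k$, so realizability of $\Z_n$ alone rules out $8 \mid n$; and your inclusion of the case $1+J = \D_8$ (quotient $\Hol(\Z_m)$) is a point of completeness, as the paper's proof only lists $|1+J| \in \{2,4\}$ in the $n = 4m$ case even though the order-$8$ possibility is handled by the same reasoning.
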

\begin{proof}
    Let $R$ be a finite ring of characteristic 2 or 4 with unit group $\Hol(\Z_n)$ and Jacobson radical $J$. We already proved that if $J = 0$, then $n \mid 6$.
    
    Suppose $J \neq 0$. Then, $1 + J$ is a normal 2-subgroup of $R^\times$ since $R$ has characteristic 2 or 4. By Proposition \ref{holfacts} (\ref{holnormint}), $n$ is even. So when $n$ is odd, we have $J = 0$ and $n \mid 6$.
    
    Note that $8 \nmid n$ since $\Z_n$ is also realizable by Proposition \ref{holfacts} (\ref{holmaxab}), and $\Z_{8k}$ is not realizable in characteristic 2 or 4 by \cite[Theorem 1]{PS70}.

    If $n = 2m$ with $m$ odd and $J \neq 0$, then $1 + J$ is cyclic of order 2. By Proposition \ref{holNSGs}, we have \[ R/J = \Hol(\Z_n)/\Hol(\Z_2) = \Hol(\Z_m).\] This means $m \mid 6$, and so $n \mid 12$.

    If $n = 4m$ with $m$ odd and $J \neq 0$, then $1 + J$ is a normal subgroup of $\Hol(\Z_4) = \D_8$ of size 2 or 4. Again using Proposition \ref{holNSGs}, in the first case, $R/J = \C_2 \times \Hol(\Z_{2m})$, and $2m \mid 6$, implying $n \mid 12$. In the second case, $R/J = \C_2 \times \Hol(\Z_m)$, and $m \mid 6$, implying $n \mid 24$. But since $8 \nmid n$, we again have $n \mid 12$.

    The groups $\Hol(\Z_1)$ and $\Hol(\Z_3) = \Sym_3$ are not realizable in characteristic 4 since they have trivial centers.
\end{proof}

\begin{prop} If $\Hol(\Z_n)$ is realizable in characteristic 6, then $n = 2$ or $n = 6$.
\end{prop}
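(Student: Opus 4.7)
The plan is to leverage the standard decomposition of a characteristic $6$ ring as a product of a characteristic $2$ ring and a characteristic $3$ ring, and then reduce to the two propositions immediately preceding. Let $R$ be a finite ring of characteristic $6$ with $R^\times \cong \Hol(\Z_n)$. The elements $3, 4 \in R$ are orthogonal central idempotents with $3 + 4 = 1$, and neither is zero since $R$ has characteristic exactly $6$. Hence $R \cong R_2 \times R_3$, where $R_2 = 3R$ has characteristic $2$ and $R_3 = 4R$ has characteristic $3$, and both factors are non-trivial. This gives
\[ \Hol(\Z_n) \cong R^\times \cong R_2^\times \times R_3^\times. \]

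The next step is to pin down $R_2^\times$ and $R_3^\times$ as holomorphs of cyclic groups via Krull--Schmidt. By Proposition \ref{holfacts} (\ref{holdecomp}), the decomposition $\Hol(\Z_n) = \prod_i \Hol(\Z_{p_i^{k_i}})$ is the unique factorization into indecomposables, so the indecomposable summands of $R_2^\times$ and $R_3^\times$ must together partition this list. Collecting the pieces assigned to each side (and using that $\Hol(\Z_c) \times \Hol(\Z_d) \cong \Hol(\Z_{cd})$ whenever $\gcd(c,d) = 1$), we obtain $R_2^\times \cong \Hol(\Z_a)$ and $R_3^\times \cong \Hol(\Z_b)$ with $n = ab$ and $\gcd(a, b) = 1$.

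At this point the two previous propositions finish the job. Since $\Hol(\Z_a)$ is realized by $R_2$ in characteristic $2$, $a \mid 12$. Since $\Hol(\Z_b)$ is realized by $R_3$ in characteristic $3$, and $R_3$ being a non-trivial characteristic $3$ ring forces $|R_3^\times| \geq 2$ (ruling out $b = 1$), we get $b \in \{2, 6\}$. If $b = 2$, then $\gcd(a, 2) = 1$ together with $a \mid 12$ gives $a \in \{1, 3\}$, so $n \in \{2, 6\}$; if $b = 6$, then $\gcd(a, 6) = 1$ together with $a \mid 12$ forces $a = 1$ and $n = 6$. Either way $n \in \{2, 6\}$, as required.

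The only step requiring real care is the Krull--Schmidt identification, which ensures that the unit groups on each side of the ring product really are holomorphs of coprime cyclic groups, rather than some non-canonical regrouping of indecomposables; this is precisely the content of Proposition \ref{holfacts} (\ref{holdecomp}). Everything else is a routine combinatorial check.
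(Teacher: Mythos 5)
Your proof is correct, but it follows a different route from the paper's. The paper disposes of this case by reducing (via the earlier bounds on $n$) to the leftover values and ruling them out directly: for $n = 1, 3$ the group $\Hol(\Z_n)$ has trivial center while the unit group of a characteristic $6$ ring contains the central involution $-1$, and for $n = 4$ it cites the dihedral-groups paper \cite{CL17d} for the fact that $\D_8$ is not realizable in characteristic $6$. You instead split the ring along the central idempotents $3$ and $4$ into a characteristic $2$ factor and a characteristic $3$ factor, identify the two unit-group factors as $\Hol(\Z_a)$ and $\Hol(\Z_b)$ with $ab = n$ and $\gcd(a,b)=1$ via Krull--Schmidt (Proposition \ref{holfacts}(\ref{holdecomp})), and then quote the characteristic $2$/$4$ proposition ($a \mid 12$) and the characteristic $3$ proposition ($b \in \{2,6\}$, with $b = 1$ excluded since $-1 \neq 1$ in a characteristic $3$ ring). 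Your argument is sound at every step, is self-contained within the paper's own propositions (no appeal to \cite{CL17d} is needed), and treats all candidate values of $n$ uniformly --- in particular it covers $n = 12$, which the paper's explicit case list $n = 1, 3, 4$ does not mention; the trade-off is that the paper's version is shorter given the external citation and the reductions already in place. One small presentational point: since the statement says ``realizable in characteristic $6$,'' you should note (as the paper's setup does) that a finite group realizable in positive characteristic is realizable by a finite ring of that characteristic, so your standing assumption that $R$ is finite is harmless.
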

\begin{proof}
    We need only prove that $\Hol(\Z_n)$ is not realizable in characteristic 6 for $n = 1, 3, 4$. For $n = 1, 3$, $\Hol(\Z_n)$ has trivial center, but the group of units in a ring of characteristic 6 has a non-trivial center. The group $\Hol(\Z_4) = \D_8$ is not realizable in characteristic 6 by \cite{CL17d}.
\end{proof}

Taken together, the propositions in this section show that the only groups of the form $\Hol(\Z_n)$ that can occur as the group of units in a finite ring are those listed in Theorem \ref{sl2main}. We leave it to the reader to check that these groups are indeed realized, in the given characteristic, by the rings listed in the statement of the theorem.

We conclude this subsection by completing the proof of Theorem \ref{neat}. In \S 1, we explained why the first three statements are equivalent, so our remaining task is to prove that the last two statements are equivalent to the first. 

\begin{proof}[Proof of Theorem \ref{neat}]
   To prove that (\ref{equiv:1}) and (\ref{equiv:4}) are equivalent, we must prove that $n$ is a divisor of 12 if and only if $\Hol(\Z_n)$ is a direct product of dihedral groups.

    Recall that the dihedral group $\D_{2n}$ is indecomposable unless $n=2k$ with $k$ odd, in which case $\D_{2n}\cong \D_2\times \D_n$ (see \cite[2.1 (C)]{CL17d}). From this if follows that a finite group $G$ is a direct product of dihedral groups if and only if each factor in a direct product decomposition of $G$ into indecomposable groups is itself dihedral. Thus, by Proposition \ref{holfacts} (\ref{holdecomp}), to prove that (\ref{equiv:1}) and (\ref{equiv:4}) are equivalent it suffices to show that, for a prime power $p^k$, $\Hol(\Z_{p^k})$ is isomorphic to a dihedral group if and only if $p^k\in\{2,3,4\}$.
    
    The if direction is clear. For the only if direction, first consider the case where $p$ is odd. Assume $G=\Hol(\Z_{p^k})$ is isomorphic to a dihedral group.  This implies that $G$ has a cyclic (and thus abelian) subgroup $H$ of index 2. Because $H$ has index $2$, for any $x\in G$ we have that $x^2\in H$. However, because $\Z_{p^k}\leq G$ is a cyclic group of odd order, every element in $\Z_{p^k}$ is the square of some element in $G$ and thus $\Z_{p^k}\leq H$. By Proposition \ref{holfacts} (\ref{holmaxab}), we have that $\Z_{p^k}$ is a maximal abelian subgroup of $G$, and so in fact $H =\Z_{p^k}$. Since $H$ has index 2, $|H|=|G|/2=p^k\varphi(p^k)/2$ where $\varphi$ is the Euler totient function. From this, it follows that $p^k=p^k\varphi(p^k)/2$ and hence $\varphi(p^k)=2$. Now, since $p$ is odd, we have $p^k=3$.

    For the even primary case, assume that $G =\Hol(\Z_{2^k})$ is isomorphic to a dihedral group. As above, there is some cyclic $H\leq G$ with index $2$. If $\Z_{2^k}\leq H$, then we get that $\varphi(2^k)=2$ as above, in which case $2^k=4$. If instead we have that $\Z_{2^k}\not\leq H$, then there must be some generator $g$ of $\Z_{2^k}$ such that $g\in \Z_{2^k}\setminus H$. Because $g$ is a generator of $\Z_{2^k}$, it necessarily has order $2^k$; however, since $G$ is dihedral and $g$ lies outside of $H$, the order of $g$ must be $2$, and so $2^k=2$. 

    Finally, we prove that (\ref{equiv:1}) and (\ref{equiv:5}) are equivalent. Observe that $|\Hol(\Z_n)|=n\,\varphi(n)$. Simple calculations show that $n\,\varphi(n)$ is a divisor of $48$ for all $n\mid 12$. Because $\varphi(n)$ is even for all $n\geq 3$, if $n\,\varphi(n)\mid 48$ it follows that $n\mid 24$. Note that every divisor of $24$ except for $8$ and $24$ are divisors of $12$, thus the equivalence of (\ref{equiv:1}) and (\ref{equiv:5}) now follows from the fact that $8\varphi(8)=32$ and $24\varphi(24)=192$ do not divide $48$. 
 \end{proof}

\subsection{Rings of characteristic zero} \label{aglchar0} Suppose $R$ is a ring of characteristic zero with group of units $\Hol(\Z_n)$. Then, since $\Z_n$ must also be realizable in characteristic zero, $n$ can be neither odd nor divisible by 8. If $n = 4m$ with $m > 0$ odd, then $\Hol(\Z_n) = \D_8 \times \Hol(\Z_m)$. The argument given in \cite[\S 4]{CL17d} may be used, almost verbatim, to prove that $\Hol(\Z_{n})$ is not realizable in characteristic zero in this case. Hence, we must have $n = 2m$ with $m$ odd.

The rings $\Z$ and $\Z \times \mathbf{M}_2(\F_2)$ have characteristic 0 and realize $\Hol(\Z_2)$ and $\Hol(\Z_6)$, respectively. The first open case is $\Hol(\Z_{10}) = \C_2 \times \Hol(\Z_5)$. The group $\Z_{2m}$ is realizable in characteristic zero for any odd integer $m$, so one does not obtain any information using Proposition \ref{holfacts} (\ref{holmaxab}).

\bibliographystyle{alpha}
\bibliography{mainrev.bib}

@article{CM13,
 ISSN = {0025570X, 19300980},
 URL = {https://www.jstor.org/stable/10.4169/math.mag.86.2.143},
 abstract = {Summary What is an interesting number theoretic characterization of the divisors of 12 among all positive integers? This paper will provide one answer in terms of modular multiplication tables. We will show that the multiplication table for the ring ℤn [x1, x2, … , xm] has 1’s only on the diagonal if and only if n is a divisor of 12.},
 author = {Sunil K. Chebolu and Michael Mayers},
 journal = {Mathematics Magazine},
 number = {2},
 pages = {pp. 143--146},
 publisher = {[Mathematical Association of America, Taylor & Francis, Ltd.]},
 title = {What Is Special about the Divisors of 12?},
 urldate = {2023-08-08},
 volume = {86},
 year = {2013}
}

@article {mills,
    AUTHOR = {Mills, W. H.},
     TITLE = {Decomposition of holomorphs},
   JOURNAL = {Pacific J. Math.},
  FJOURNAL = {Pacific Journal of Mathematics},
    VOLUME = {11},
      YEAR = {1961},
     PAGES = {1443--1446},
      ISSN = {0030-8730,1945-5844},
   MRCLASS = {20.52},
  MRNUMBER = {133378},
MRREVIEWER = {V.\ S.\ Krishnan},
       URL = {http://projecteuclid.org/euclid.pjm/1103036927},
}

@article {miller,
    AUTHOR = {Miller, G. A.},
     TITLE = {On the holomorph of a cyclic group},
   JOURNAL = {Trans. Amer. Math. Soc.},
  FJOURNAL = {Transactions of the American Mathematical Society},
    VOLUME = {4},
      YEAR = {1903},
    NUMBER = {2},
     PAGES = {153--160},
      ISSN = {0002-9947,1088-6850},
   MRCLASS = {20K01},
  MRNUMBER = {1500633},
       DOI = {10.2307/1986355},
       URL = {https://doi.org/10.2307/1986355},
}

@article {PS70,
    AUTHOR = {Pearson, K. R. and Schneider, J. E.},
     TITLE = {Rings with a cyclic group of units},
   JOURNAL = {J. Algebra},
  FJOURNAL = {Journal of Algebra},
    VOLUME = {16},
      YEAR = {1970},
     PAGES = {243--251},
      ISSN = {0021-8693},
   MRCLASS = {16.50},
  MRNUMBER = {265405},
MRREVIEWER = {K. A. Hirsch},
       DOI = {10.1016/0021-8693(70)90030-X},
       URL = {https://doi-org.ezpro.cc.gettysburg.edu/10.1016/0021-8693(70)90030-X},
}

@article {Ditor71,
    AUTHOR = {Ditor, S. Z.},
     TITLE = {On the group of units of a ring},
   JOURNAL = {Amer. Math. Monthly},
  FJOURNAL = {American Mathematical Monthly},
    VOLUME = {78},
      YEAR = {1971},
     PAGES = {522--523},
      ISSN = {0002-9890},
   MRCLASS = {16.10},
  MRNUMBER = {283008},
MRREVIEWER = {W. E. Clark},
       DOI = {10.2307/2317760},
}

@article {CL17,
    AUTHOR = {Chebolu, Sunil K. and Lockridge, Keir},
     TITLE = {How many units can a commutative ring have?},
   JOURNAL = {Amer. Math. Monthly},
  FJOURNAL = {American Mathematical Monthly},
    VOLUME = {124},
      YEAR = {2017},
    NUMBER = {10},
     PAGES = {960--965},
      ISSN = {0002-9890},
   MRCLASS = {13A05},
  MRNUMBER = {3733305},
MRREVIEWER = {Joe A. Stickles, Jr.},
       DOI = {10.4169/amer.math.monthly.124.10.960},
}

@article {CL15,
    AUTHOR = {Chebolu, Sunil K. and Lockridge, Keir},
     TITLE = {Fuchs' problem for indecomposable abelian groups},
   JOURNAL = {J. Algebra},
  FJOURNAL = {Journal of Algebra},
    VOLUME = {438},
      YEAR = {2015},
     PAGES = {325--336},
      ISSN = {0021-8693},
   MRCLASS = {20K99 (13A99 16U60)},
  MRNUMBER = {3353034},
MRREVIEWER = {Chiteng'a John Chikunji},
       DOI = {10.1016/j.jalgebra.2015.05.011},

}

@article {SW20,
    AUTHOR = {Swartz, Eric and Werner, Nicholas J.},
     TITLE = {Corrigendum to ``{F}uchs' problem for 2-groups'' [{J}.
              {A}lgebra 556 (2020) 225-245]},
   JOURNAL = {J. Algebra},
  FJOURNAL = {Journal of Algebra},
    VOLUME = {608},
      YEAR = {2022},
     PAGES = {25--36},
      ISSN = {0021-8693},
   MRCLASS = {16U60 (16P10 20C05)},
  MRNUMBER = {4436478},
       DOI = {10.1016/j.jalgebra.2022.04.032},
}

@article {CL19,
    AUTHOR = {Chebolu, Sunil K. and Lockridge, Keir},
     TITLE = {Fuchs' problem for {$p$}-groups},
   JOURNAL = {J. Pure Appl. Algebra},
  FJOURNAL = {Journal of Pure and Applied Algebra},
    VOLUME = {223},
      YEAR = {2019},
    NUMBER = {11},
     PAGES = {4652--4666},
      ISSN = {0022-4049},
   MRCLASS = {16U60 (11T30 13M05 20D15 20K01 20K21)},
  MRNUMBER = {3955034},
MRREVIEWER = {Todor Zheljazkov Mollov},
       DOI = {10.1016/j.jpaa.2019.02.008},
}

@article {CL17d,
    AUTHOR = {Chebolu, Sunil K. and Lockridge, Keir},
     TITLE = {Fuchs' problem for dihedral groups},
   JOURNAL = {J. Pure Appl. Algebra},
  FJOURNAL = {Journal of Pure and Applied Algebra},
    VOLUME = {221},
      YEAR = {2017},
    NUMBER = {4},
     PAGES = {971--982},
      ISSN = {0022-4049},
   MRCLASS = {16U60 (16P10)},
  MRNUMBER = {3574219},
MRREVIEWER = {David Dol\v{z}an},
       DOI = {10.1016/j.jpaa.2016.08.015},
}

@article {DO14b,
    AUTHOR = {Davis, Christopher and Occhipinti, Tommy},
     TITLE = {Which alternating and symmetric groups are unit groups?},
   JOURNAL = {J. Algebra Appl.},
  FJOURNAL = {Journal of Algebra and its Applications},
    VOLUME = {13},
      YEAR = {2014},
    NUMBER = {3},
     PAGES = {1350114, 12},
      ISSN = {0219-4988},
   MRCLASS = {16U60 (20C05 20C20)},
  MRNUMBER = {3125882},
MRREVIEWER = {Todor Zheljazkov Mollov},
       DOI = {10.1142/S0219498813501144},
}

@article {DO14a,
    AUTHOR = {Davis, Christopher and Occhipinti, Tommy},
     TITLE = {Which finite simple groups are unit groups?},
   JOURNAL = {J. Pure Appl. Algebra},
  FJOURNAL = {Journal of Pure and Applied Algebra},
    VOLUME = {218},
      YEAR = {2014},
    NUMBER = {4},
     PAGES = {743--744},
      ISSN = {0022-4049},
   MRCLASS = {16U60 (20D99)},
  MRNUMBER = {3133704},
MRREVIEWER = {Michael Dokuchaev},
       DOI = {10.1016/j.jpaa.2013.08.013},
}

@article {Gil63,
    AUTHOR = {Gilmer, Jr., Robert W.},
     TITLE = {Finite rings having a cyclic multiplicative group of units},
   JOURNAL = {Amer. J. Math.},
  FJOURNAL = {American Journal of Mathematics},
    VOLUME = {85},
      YEAR = {1963},
     PAGES = {447--452},
      ISSN = {0002-9327},
   MRCLASS = {16.94},
  MRNUMBER = {154884},
MRREVIEWER = {W. E. Deskins},
       DOI = {10.2307/2373134},
}

@book {F60,
    AUTHOR = {Fuchs, L.},
     TITLE = {Abelian groups},
    SERIES = {International Series of Monographs on Pure and Applied
              Mathematics},
 PUBLISHER = {Pergamon Press, New York-Oxford-London-Paris},
      YEAR = {1960},
     PAGES = {367},
   MRCLASS = {20.00},
  MRNUMBER = {0111783},
}

@misc{ourcode,
    author={Lockridge, Keir and Terkel, Jacinda},
    year = {2023},
  title = {{GAP/SageMath} Code},
  howpublished = {\url{https://cocalc.com/klockrid/main/SL2F3}},
  
}

@manual{GAP4,
    key          = "GAP",
    organization = "The GAP~Group",
    title        = "{GAP -- Groups, Algorithms, and Programming,
                    Version 4.12.1}",
    year         = 2022,
    howpublished          = {\href{https://www.gap-system.org}},
    }

@misc{sagemath,
key="{SageMath}",
organization={{The Sage Developers}},
Title        = {{S}age{M}ath, the {S}age {M}athematics {S}oftware {S}ystem ({V}ersion 9.6)},
  Year         = {2022},
  howpublished = {\url{https://www.sagemath.org}},
}
\end{document}